%This is for amsart
%\documentclass[12pt]{amsart}

%This is for article
\documentclass[12pt, leqno]{article}

\usepackage{amsmath, amssymb, amscd, verbatim, xspace,amsthm}
\usepackage{latexsym, epsfig, color}

\newcommand{\A}{\ensuremath{{\mathbb{A}}}}

\newcommand{\Z}{\ensuremath{{\mathbb{Z}}}\xspace}
\renewcommand{\P}{\ensuremath{{\mathbb{P}}}}

\newcommand{\ra}{\rightarrow}
\newcommand{\lra}{\longrightarrow}

\newcommand\Sym{\operatorname{Sym}}

\newcommand\tensor{\otimes}
\newcommand\isom{\cong}

\newcommand\tesnor{\otimes}

\newcommand\disc{\operatorname{disc}}

\newcommand\GL{\operatorname{GL}}

\newcommand\SL{\operatorname{SL}}

\newcommand\Spec{\operatorname{Spec}}
\newcommand\Proj{\operatorname{Proj}}

\newcommand\ts{^{\tensor 2}}
\newcommand\wt{\wedge^2}

\newcommand\GZ{\ensuremath{\GL_2(\Z)}\xspace}
\newcommand\Sf{\ensuremath{{S_f}}\xspace}
\newcommand\OSf{\ensuremath{\mathcal{O}_\Sf}\xspace}

\newcommand\BS{\ensuremath{S}\xspace}

\newcommand\OS{\ensuremath{{\mathcal{O}_\BS}}\xspace}

\newcommand\Gto{\GZ\times\GL_1(\Z)}
\newcommand\Go{\GL_1(\Z)}

\newcommand\map[4]{\ensuremath{\begin{array}{ccc}#1&\lra&#2\\#3&\mapsto&#4\end{array}}}
\newcommand\bij[2]{\ensuremath{\left\{\parbox{2.5 in}{#1}\right\} \longleftrightarrow \left\{\parbox{2.5 in}{#2}\right\}}}

\newcommand\bq{\begin{equation}}
\newcommand\eq{\end{equation}}

%This numbers everything by section
\newtheorem{proposition}{Proposition}[section]
\newtheorem{theorem}[proposition]{Theorem}
\newtheorem{corollary}[proposition]{Corollary}

\theoremstyle{remark}
\newtheorem{remark}[proposition]{Remark}

 \usepackage{fullpage}
%\usepackage{showkeys}

%This will keep definitions and notations unnumbered and non-italicized.

\newenvironment{notation}{\vspace{2 ex}{\noindent{\bf Notation. }}}{\vspace{2 ex}}
\newenvironment{keycon}{\vspace{2 ex}{\noindent{\bf Key Construction. }}}{\vspace{2 ex}}
\newenvironment{globcon}{\vspace{2 ex}{\noindent{\bf Global Construction. }}}{\vspace{2 ex}}

%Use this to number and italicize definitions and notations, but need to cut out above environments.
%\newtheorem{definition}[proposition]{Definition}
%\newtheorem{notation}[proposition]{Notation}

\title{Gauss composition over an arbitrary base}

\author{Melanie Matchett Wood\thanks{mwood@math.stanford.edu}\\
Stanford University, Department of Mathematics, \\
Building 380, Sloan Hall, \\
Stanford, California 94305}

\begin{document}

\maketitle

\begin{abstract}
The classical theorems relating integral binary quadratic forms and ideal classes of quadratic orders have been of tremendous importance in mathematics, and many authors have given extensions of these theorems to rings other than the integers.  However, such extensions have always included hypotheses on the rings, and the theorems involve only binary quadratic forms satisfying further hypotheses.  We give a complete statement of the relationship between binary quadratic forms and modules for quadratic algebras over any base ring, or in fact base scheme.  The result includes all binary quadratic forms, and commutes with base change.  We give global geometric  as well as local explicit descriptions of the relationship between forms and modules.
\end{abstract}

%\begin{keyword}
%binary quadratic forms, Gauss composition, quadratic orders, class groups
%\end{keyword}

\section{Introduction}\label{S:intro}

	The classical theorems relating binary quadratic forms $ax^2+bxy+cy^2$ with $a,b,c\in\Z$ and
ideal classes in quadratic rings have seen wide application.
The usual statements of these theorems are as follows.
\begin{theorem}[e.g. see {\cite[Theorems 5.2.8 and 5.2.9]{Cohen}}]\label{classical}
 If $D$ is a negative integer, then there is a bijection between the $\SL_2(\Z)$ equivalence classes of positive definite, primitive, integral binary quadratic forms of discriminant $D$ and the elements of the class group of the quadratic order with discriminant $D$.

If $D$ is a positive, non-square integer, then there is a bijection between the $\SL_2(\Z)$ equivalence classes of primitive, integral binary quadratic forms of discriminant $D$ and the elements of the narrow class group of the quadratic order with discriminant $D$.
\end{theorem}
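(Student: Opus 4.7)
The plan is to construct explicit maps in both directions between $\SL_2(\Z)$-equivalence classes of forms and elements of the (narrow) class group, and then verify that these maps are mutual inverses respecting the equivalence relations.

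First I would fix a model of the quadratic order: $\mathcal{O}_D = \Z + \Z\omega_D$ with $\omega_D=(D+\sqrt{D})/2$, so that $\Disc(\mathcal{O}_D)=D$. To a primitive integral form $Q(x,y)=ax^2+bxy+cy^2$ of discriminant $D$ I would associate the $\Z$-submodule
$$I_Q \;=\; a\Z + \tfrac{-b+\sqrt{D}}{2}\Z \;\subset\; \mathcal{O}_D\tensor\Q.$$
A short calculation using $b^2-4ac=D$ shows that $I_Q$ is closed under multiplication by $(-b+\sqrt{D})/2$, hence is an $\mathcal{O}_D$-submodule; primitivity of $(a,b,c)$ then forces $I_Q$ to be proper and therefore invertible. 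In the reverse direction, to an invertible ideal $I$ with an ordered $\Z$-basis $(\alpha,\beta)$ I would assign the form
$$Q_I(x,y) \;=\; \frac{N(\alpha x - \beta y)}{N(I)},$$
which is integral of discriminant $D$.

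Next I would check that these constructions descend to equivalence classes and are mutually inverse. Changing the ordered basis of $I$ by a matrix $M\in\GL_2(\Z)$ transforms $Q_I$ by $M$, while multiplying $I$ by a principal ideal $(\lambda)$ induces a basis change whose determinant has the same sign as $N(\lambda)$. Hence $\SL_2(\Z)$-equivalence of forms corresponds exactly to equivalence of ideals modulo principal ideals whose generator has positive norm.

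Finally I would split into the two cases of the theorem. For $D<0$, every nonzero element of $\mathcal{O}_D$ has positive norm, so the positive-norm principal relation is just the principal relation, and the classifying object is the ordinary class group; positive definiteness of $Q$ is equivalent to $a>0$ and pins down the orientation of $(\alpha,\beta)$, yielding the claimed bijection. For $D>0$ non-square, generators of positive norm are exactly the totally positive principal generators, so the classifying object is by definition the narrow class group. The main obstacle, as usual in this subject, is the careful orientation and sign bookkeeping that distinguishes $\SL_2(\Z)$ from $\GL_2(\Z)$ and the narrow class group from the wide one; once this is set up consistently, everything else reduces to direct computation inside $\mathcal{O}_D$.
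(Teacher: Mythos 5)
The paper does not actually prove this statement: it is quoted as a known classical result with a citation to Cohen, and the paper's own contribution is to recover it as a degenerate-free, primitive, non-degenerate specialization of its general Theorems 1.2--1.5 (via twisted forms, traceable modules, and the ideal--module comparison of Proposition 1.8), rather than by the direct ideal-theoretic construction you give. Your proposal is the standard textbook argument and the overall architecture is sound: the lattice $a\Z + \tfrac{-b+\sqrt{D}}{2}\Z$ is indeed an $\mathcal{O}_D$-module (since $\tau=\tfrac{-b+\sqrt{D}}{2}$ satisfies $\tau^2=-b\tau-ac$), primitivity corresponds to properness and hence invertibility, and the norm form recovers the quadratic form up to the expected sign.

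The one place where the proposal as written would fail, and which you should not leave under the heading of generic ``bookkeeping,'' is that your forward map forgets information in the indefinite case: since $a\Z=(-a)\Z$, the forms $ax^2+bxy+cy^2$ and $-ax^2+bxy-cy^2$ (both of discriminant $D$) produce literally the same lattice $I_Q$, yet for $D>0$ they need not be $\SL_2(\Z)$-equivalent --- the difference between them is precisely what the narrow class group versus the ordinary class group detects. So the target of the forward map cannot be bare ideals; it must be ideals equipped with an orientation (equivalently, an ordered basis up to $\SL_2(\Z)$, or a choice of sign of $(\alpha\bar{\beta}-\bar{\alpha}\beta)/\sqrt{D}$), with the convention chosen so that the round trip returns $Q$ rather than $\sign(a)\,Q$. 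Once you make the orientation part of the data on the ideal side and check that multiplication by $\lambda$ acts on it by $\sign(N(\lambda))$, the rest of your argument goes through; you should also record the (easy but necessary) verification that $N(\alpha x-\beta y)/N(I)$ is integral and primitive when $I$ is invertible.
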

 
Theorem~\ref{classical} is the basis of the best computational algorithms for working with class groups of quadratic orders (see \cite[Chapter 5]{Cohen}), and also is the basis for genus theory, which gives a much more complete understand of the two part of quadratic class groups than we have of any other parts of any other class groups.  In this paper, we give the generalization
of Theorem~\ref{classical} when $\Z$ is replaced by an arbitrary base ring or scheme (Theorems~\ref{T:gen}, \ref{T:bqf}, and \ref{T:tbqf}).  We also state the theorems in a way that allows the removal of all hypothesis on the discriminant and the integral binary quadratic forms, and also unites the case of positive and negative discriminant.  For example, over $\Z$, we have the following.

\begin{theorem}\label{T:classGL2}
There is a bijection 
$$\bij{twisted \GZ-equivalence classes of non-degenerate binary quadratic forms}{isomorphism classes of $(C,I)$, with $C$ a non-degenerate oriented quadratic ring, and $I$ a full ideal class of $C$}.$$
This bijection is discriminant preserving, i.e. if $f\leftrightarrow(C,I)$ then $\disc f=\disc C$.
\end{theorem}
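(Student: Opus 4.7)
The plan is to derive Theorem~\ref{T:classGL2} as the specialization to $S=\Spec\Z$ of the general base-scheme bijection (Theorem~\ref{T:tbqf}), and to unwind what the general construction says in the very concrete setting of $\Z$. Since $\Z$ is a PID, every rank-$2$ projective module (in particular every full ideal $I$ of a non-degenerate oriented quadratic ring $C$) is free, and every line bundle is trivial. This means the extra data that would normally be needed over a general base (local trivializations, line-bundle twists, orientation sheaves) all reduce to integer coefficients and $\{\pm1\}$-valued signs, so the substance is to match the two sides explicitly and check well-definedness.

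For the forward map, given a non-degenerate $f(x,y)=ax^2+bxy+cy^2$ with $D=b^2-4ac\neq 0$, I would form the quadratic ring
\[
C_f \;=\; \Z[\tau]/(\tau^2 - b\tau + ac),
\]
which has $\disc C_f = D$, and equip it with the orientation coming from the ordered basis $(1,\tau)$. I then attach to $f$ the full ideal class generated (after localizing away from the content when $f$ is imprimitive) by a rank-$2$ submodule whose multiplication table encodes $a,b,c$; concretely one can take $I_f=\langle a,\tau\rangle$ when $a\neq 0$ and patch on standard opens of $\Spec\Z$ otherwise. The key point, already handled in the general Theorem~\ref{T:tbqf}, is that this makes sense even when $\gcd(a,b,c)>1$, because the construction is module-theoretic rather than ideal-theoretic.

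For the reverse map, given $(C,I)$, I would pick any $\Z$-basis $(e_1,e_2)$ of $I$ and set
\[
f(x,y) \;=\; \frac{N_{C/\Z}(xe_1+ye_2)}{N(I)},
\]
normalized so that $f\in\Z[x,y]$ and $\disc f = \disc C$. A change of basis for $I$ modifies $f$ by the corresponding $\GZ$-substitution, while negating the orientation of $I$ (the only nontrivial unit ambiguity over $\Z$) rescales $f$ by $-1$; the latter is precisely the twist in ``twisted $\GZ$-equivalence.'' Mutual inverseness reduces to matching the multiplication-by-$\tau$ matrix in the chosen basis of $I$ with the matrix built from $(a,b,c)$.

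The principal obstacle is not the invertible case (classical Gauss composition already covers it) but rather the uniform treatment of \emph{non-invertible} ideals together with the orientation bookkeeping: $(C,I)$ does not form a group under multiplication once $I$ is allowed to be non-invertible, and simultaneously the sign-of-discriminant dichotomy of Theorem~\ref{classical} must be absorbed into the single notion of ``oriented quadratic ring'' on one side and ``twisted $\GZ$-equivalence'' on the other. Once Theorem~\ref{T:tbqf} is available, this matching is forced; verifying it over $\Z$ then amounts to checking that the orientation of $C$, the orientation of $I$, and the $\pm1$-twist are compatibly paired by the explicit formulas above.
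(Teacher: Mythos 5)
Your overall route is the paper's: specialize the general bijection (Theorem~\ref{T:tbqf}, itself a specialization of Theorem~\ref{T:gen} with $L=\wedge^2V^*$) to $S=\Spec\Z$, use that every locally free module over $\Z$ is free, and restrict to the non-degenerate locus. However, two steps that the paper treats as genuine theorems are asserted rather than argued, and one of them hides a real subtlety. First, Theorem~\ref{T:tbqf} outputs pairs $(C,M)$ with $M$ a \emph{traceable module}, not an ideal class; the statement you are proving is about full ideal classes. The bridge is Proposition~\ref{P:idmod}: for non-degenerate $C$ every traceable module embeds as a full ideal (via the rational-normal-form argument showing $M\tensor_\Z\Q\isom C\tensor_\Z\Q$ as $C\tensor\Q$-modules), and --- crucially --- two full ideals are in the same ideal class \emph{if and only if} they are isomorphic as $C$-modules. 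Without the ``only if'' direction your norm-form map is not known to be constant on ideal classes, and without the ``if'' direction your forward map is not known to be injective on twisted classes; neither direction is addressed in your outline, and the second one requires an actual argument (the paper's trick of evaluating a module isomorphism $\phi:I\ra J$ at an element $d\in D\cap I$ to get $\phi(d)I=dJ$).

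Second, the formula $f(x,y)=N_{C/\Z}(xe_1+ye_2)/N(I)$ for ``any $\Z$-basis $(e_1,e_2)$ of $I$'' is not well-defined up to twisted $\GZ$-equivalence as written. A basis change by $g$ acts on the norm form by plain substitution, which equals $\det(g)$ times the twisted action; so a determinant $-1$ change of basis replaces the twisted class of $f$ by that of $-f$, and $-f$ is twisted-$\GZ$-equivalent to $f$ only for ambiguous classes (e.g.\ it fails for $2x^2+xy+3y^2$ of discriminant $-23$). You must use the orientation of $C$ to single out the positively oriented bases of $I$, and then check that replacing $I$ by $cI$ flips this orientation exactly when it flips the sign of the norm form (namely when $N(c)<0$), so that the two sign changes cancel. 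You flag this compatibility in your last paragraph but do not carry it out; it is precisely the point at which the ordinary class group, rather than the narrow one, appears, so it cannot be waved through. The paper sidesteps the norm form entirely: its reverse construction reads $a,b,c$ off the multiplication-by-$\tau$ matrix in a normalized basis of $M$ (equivalently, the global map $C/\OS\tensor\wedge^2M\ra\Sym^2M$), for which the orientation bookkeeping is built into the identification $C/\OS\isom\wedge^2V^*\tensor L^*$. Your classical formulas can be made to work, but the orientation and module-versus-ideal verifications above are the actual content of the deduction and are currently missing.
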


A \emph{quadratic ring} is a ring that is a free rank 2 \Z-module under addition, and orders in quadratic number fields are the typical (but not only) examples of quadratic rings.
An  ideal $I$ of a quadratic ring $C$ is \emph{full} if it is rank 2 as a \Z-module.  (When $C$ is a domain,
full is equivalent to non-zero.)
%Both a binary quadratic form and a quadratic ring have an integer-valued discriminant, which is preserved in
%the above isomorphism. 
 A form or ring is \emph{non-degenerate} if its discriminant is non-zero.  
An \emph{oriented quadratic ring} is a quadratic ring $R$ with a choice of generator of $R/\Z$ (there are two choices),
and an isomorphism of oriented quadratic rings must preserve this generator.  
The most subtle issue in Theorem~\ref{T:classGL2} is the \GZ action.  An element $g=
\left(
\begin{smallmatrix}
k & \ell\\
m&n
\end{smallmatrix}\right)$ acts on a form $f(x,y)=ax^2+bxy+cy^2$ by
$$g \circ f = \frac{1}{\det g} f(kx+\ell y,mx+ny).$$
We will call this the \emph{twisted} \GZ action on binary quadratic forms.

Theorem~\ref{T:classGL2} gives a bijection between classes of binary quadratic forms and elements of the class group of real quadratic orders which is often overlooked when the classical theorem relating $\SL_2(\Z)$ classes of forms to the narrow class group is mentioned.  Theorem~\ref{T:classGL2} also doesn't restrict to positive definite or primitive forms.
In fact, we will later remove the non-degeneracy condition from Theorem~\ref{T:classGL2} (in Theorem~\ref{T:Zdeg} over \Z)

It is interesting to note that quadratic rings are in bijection with integers $D$ congruent to
0 or 1 modulo 4; the bijection is given by the discriminant of the quadratic ring.
The condition that the quadratic ring is \emph{oriented} may seem unnatural.  However, without requiring orientation, all quadratic rings have an automorphism, which in particular takes an ideal class to its inverse, and therefore the equivalence classes of $(C,I)$ would not correspond to elements of a class group but rather to elements of a quotient of the class group.  In fact, Theorem~\ref{T:classGL2} is a specialization of the following result which removes the orientation condition.

\begin{theorem}\label{T:classGL2GL1}
There is a bijection
$$\bij{$\Gto$ equivalence classes of non-degenerate binary quadratic forms}{isomorphism classes of $(C,I)$, with $C$ a non-degenerate quadratic ring and $I$ a full ideal class of $C$}.$$
This isomorphism is discriminant preserving.
\end{theorem}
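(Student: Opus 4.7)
The plan is to deduce Theorem~\ref{T:classGL2GL1} from Theorem~\ref{T:classGL2} by quotienting each side of that bijection by a compatible $\Z/2$-action: the extra $\Go = \GL_1(\Z) = \{\pm 1\}$ factor on the form side should correspond, under the bijection of Theorem~\ref{T:classGL2}, precisely to forgetting the orientation on the ring side.

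First, I would pin down the $\Gto$-action on binary quadratic forms. The $\GZ$-factor acts by the twisted action already introduced, and the natural extension to $\Go$ acts on the orientation of $\Z^2$: the generator $-1 \in \Go$ acts by $(-1)\circ f = -f$, or equivalently by scaling $\wt \Z^2$ by $-1$ in the twisted formula $\tfrac{1}{\det g}f(g(x,y))$. On the ring-ideal side, any non-degenerate quadratic ring $C$ carries a unique non-trivial automorphism $\tau$, which reverses the canonical orientation on $C/\Z$ and sends any ideal class $[I]$ to the conjugate class $[\bar I]$. Thus, forgetting orientation identifies the oriented pair $(C,\sigma,I)$ with $(C,\tau\sigma,\bar I)$, and unoriented isomorphism classes are exactly the orbits of this involution on the set of oriented classes.

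The main step is to verify that, under the bijection of Theorem~\ref{T:classGL2}, the $\Go$-action on forms corresponds to this orientation-flipping involution on oriented $(C,I)$. Concretely, the expected explicit bijection sends a form $f = ax^2 + bxy + cy^2$ to the oriented ring $\Z[\tau]/(\tau^2 - b\tau + ac)$, together with the class of the ideal $(a,\tau)$. Applying $-1 \in \Go$ to $f$ yields $-ax^2-bxy-cy^2$; a direct check shows that this produces the same underlying ring, but with the opposite orientation and the conjugate ideal class. Modulo conventions, this gives the required equivariance.

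With equivariance in hand, the bijection of Theorem~\ref{T:classGL2} descends to a bijection of quotient sets, which is exactly Theorem~\ref{T:classGL2GL1}. Discriminant preservation is inherited from Theorem~\ref{T:classGL2}, since neither the $\Go$-scaling of forms nor the orientation flip changes the discriminant. The main obstacle is the equivariance verification: although the underlying computation is elementary, it rests on the explicit form of the bijection of Theorem~\ref{T:classGL2} (one of the central constructions of this paper), so care is needed to ensure that the orientation conventions on both sides are compatible and that the scaling $f \mapsto -f$ truly realises the orientation-flipping involution on oriented pairs.
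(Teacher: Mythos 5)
Your overall strategy --- descending from the oriented Theorem~\ref{T:classGL2} to the unoriented Theorem~\ref{T:classGL2GL1} by quotienting both sides by compatible involutions --- is genuinely different from the paper's route. The paper never derives either of these theorems from the other: both are obtained as the non-degenerate, $S=\Spec\Z$ case of the general Theorem~\ref{T:gen} (and its specializations in Section~\ref{S:bqf}), combined with Proposition~\ref{P:idmod} to convert traceable modules into full ideal classes; Theorem~\ref{T:classGL2GL1} is essentially Corollary~\ref{C:ideals} with $D=\Z$. Your reduction is not circular, since the paper proves Theorem~\ref{T:classGL2} independently of Theorem~\ref{T:classGL2GL1}, and the two bookkeeping facts you need --- that $\Gto$-orbits are twisted-$\GZ$-orbits modulo $f\mapsto -f$, and that unoriented isomorphism classes of $(C,I)$ are oriented classes modulo reversal of orientation --- are both correct.

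However, your identification of the involution on the oriented side is wrong, and as stated the equivariance step fails. You assert that forgetting the orientation identifies $(C,\sigma,I)$ with $(C,\tau\sigma,\bar I)$, and that $-f$ corresponds to the same ring with \emph{both} the opposite orientation \emph{and} the conjugate ideal class. But $(C,\sigma,I)$ and $(C,\tau\sigma,\bar I)$ are already isomorphic as \emph{oriented} pairs, via the conjugation automorphism $\tau$ itself; the operation ``flip the orientation and conjugate the ideal'' is the identity on oriented isomorphism classes. If $-f$ really corresponded to $(C,\tau\sigma,\bar I)$, then $f$ and $-f$ would always lie in the same twisted-$\GZ$ class and the oriented and unoriented classifications would coincide, which is false: for discriminant $-23$ the class group has order $3$, and a form whose ideal class is not $2$-torsion is not twisted-$\GZ$-equivalent to its negative. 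The correct statement, which one reads off from the Key Construction (replacing $(a,b,c)$ by $(-a,-b,-c)$ sends $\tau$ to $-\tau$ up to a shift by $\Z$), is that $-f$ yields the same ring $C$ and literally the same module $M$, with only the distinguished generator of $C/\Z$ negated --- equivalently, the same oriented ring with the conjugate ideal class, since $(C,-\sigma,I)\cong(C,\sigma,\bar I)$ via $\tau$. With the involution corrected to ``reverse the orientation, keep the ideal,'' your descent argument does go through, and discriminant preservation is inherited as you say.
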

In this theorem, $\Go$ acts on a form by $(k) \circ f = kax^2+kbxy+kcy^2$.
This allows multiplication of the form by $-1$.
  When we also act by
$\Go$, we can change the $\GZ$ action by any twist without changing the equivalence classes, and so here we
may as well consider the most natural (non-twisted) \GZ action  
$$g \circ f = f(ax+by,cx+dy).$$  

Theorems~\ref{T:classGL2} and \ref{T:classGL2GL1} are called \emph{Gauss composition} because they give a composition law on
binary quadratic forms with the same discriminant, worked out explicitly by Gauss \cite{Gauss}, but given now more simply by multiplication of ideal classes. 
Given this beautiful and extremely useful correspondence over the integers, one naturally wonders what happens when
the integers are replaced by other rings, or if one is inclined to think geometrically,
when the integers are replaced by the sheaf of functions on another scheme.
In this paper, we give the Gauss composition correspondence over an arbitrary base scheme $S$.  In the case
$S=\Spec R$, we have the correspondence over an arbitrary base ring $R$.  

We now give the definitions necessary to work over an arbitrary scheme. 
The most important change from $\Z$ to a scheme $S$ is that previously we allowed the quadratic form
to have variables $x$ and $y$ which generate a free rank 2 $\Z$-module and thus were acted on by $\GZ$.  Over an arbitrary 
scheme $S$, we allow the variables of the binary form to be in a locally free rank 2 $\OS$-module
instead of just a free module.  (Over \Z all locally free modules are free.)
A \emph{binary quadratic form} over $S$ is a locally free rank 2 $\OS$-module $V$
and a global section $f\in \Sym^2 V$. Isomorphisms of binary quadratic forms $(V,f)$ and $(V',f')$ are
given by isomorphisms $V\ra V'$ that send $f$ to $f'$.   
The notion of isomorphism classes of binary quadratic forms replaces $\GZ$-equivalence.
 Over $\Z$, isomorphism classes of binary quadratic forms correspond exactly to non-twisted $\GZ$ equivalence classes of binary
 quadratic forms.

Now, we take a different point of view on our above $\Go$ action.  We could consider a form $f=ax^2z+bxyz+cy^2z$ and then view our above
$\Go$ action as the invertible changes of coordinates in the $z$ variable (which over $\Z$ are just multiplication by $\pm1$).
Then analogously to our above transition to an arbitrary scheme $S$, we make the following definition.  A \emph{linear binary
quadratic form} over $S$ is a locally free rank $2$ $\OS$-module $V$, a locally free rank 1 $\OS$-module $L$, and a global section $f\in \Sym^2 V\tensor L$. Isomorphisms are given by isomorphisms $V \ra V'$ and $L \ra L'$ that send $f \mapsto f'$.   
Over $\Z$, isomorphism classes of linear binary quadratic forms correspond exactly to the $\Gto$ equivalence classes described 
in Theorem~\ref{T:classGL2GL1}. A \emph{quadratic algebra} $C$ over $S$ is a locally free rank 2 \OS-algebra, which by taking $\Spec$ corresponds exactly to finite, flat degree 2 $S$-scheme.  We now specify which $C$-modules will appear in Gauss composition.
   A $C$-module $M$ is \emph{traceable} if $M$ is a locally free rank 2 $\OS$-module and if $C$ and $M$ give the same trace map
$C \ra \OS$.  
We now are ready to give the main theorem of Gauss composition over an arbitrary base.

\begin{theorem}\label{T:gen}
There is a bijection
$$
 \left\{\parbox{2.5 in}{isomorphism classes of linear binary quadratic 
forms/$S$}\right\}\longleftrightarrow \left\{\parbox{2.5 in}{isomorphism classes of $(C,M)$, with $C$ a quadratic algebra$/S$, and
$M$ a traceable $C$-module }\right\}.$$
Given $(C,M)$ and a corresponding $f\in\Sym^2 V\tensor L$, we have $M\isom V $ as $\OS$-modules and
$$
C/\OS \isom \wedge^2 V^* \tesnor L^*
$$
as \OS-modules.  
\end{theorem}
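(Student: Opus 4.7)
The plan is to construct mutually inverse maps between the two sets of isomorphism classes. Both sides describe local data on $S$: linear binary quadratic forms are global sections of $\Sym^2 V \otimes L$, and pairs $(C, M)$ are local as well. I work Zariski-locally on $S$ where $V$, $L$, $C$, $M$ all become free, verify the bijection explicitly in coordinates, and then check that the constructions are canonical enough to glue. The overall structure is (i) the forward map as a norm form, (ii) the reverse map as a universal algebra built from $f$, and (iii) checking mutual inversion.

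\emph{Forward direction.} Given $(C, M)$, set $V := M$. For $m \in M$, the multiplication map $\mu_m : C \to M$, $c \mapsto cm$, is $\OS$-linear between locally free rank $2$ modules, so its determinant is a well-defined element $\det(\mu_m) \in \Hom(\wedge^2 C, \wedge^2 M) = (\wedge^2 C)^* \otimes \wedge^2 M$, quadratic in $m$. Interpreted as a quadratic function, this gives $N_M \in \Sym^2 M^* \otimes (\wedge^2 C)^* \otimes \wedge^2 M$ (using the canonical identification $(\Gamma^2 M)^* = \Sym^2 M^*$). Applying the rank-$2$ duality $M^* \cong M \otimes (\wedge^2 M)^*$, which yields $\Sym^2 M^* \cong \Sym^2 M \otimes (\wedge^2 M)^{\otimes -2}$, I rewrite $N_M$ as an element $f \in \Sym^2 V \otimes L$ with $L := (\wedge^2 M)^* \otimes (\wedge^2 C)^*$. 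The asserted isomorphism $C/\OS \cong \wedge^2 V^* \otimes L^*$ then unpacks to $\wedge^2 M^* \otimes \wedge^2 M \otimes \wedge^2 C = \wedge^2 C$, combined with the canonical $\wedge^2 C \cong C/\OS$ coming from the short exact sequence $0 \to \OS \to C \to C/\OS \to 0$ of locally free $\OS$-modules.

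\emph{Reverse direction and inversion.} For $(V, L, f) \mapsto (C, M)$, set $M := V$ and $C := \OS \oplus N$ as $\OS$-module, where $N := \wedge^2 V^* \otimes L^*$. The form $f$, via the rank-$2$ duality $\wedge^2 V^* \otimes V \cong V^*$, produces an $\OS$-linear action $N \otimes V \to V$; the algebra structure on $C$ is then forced by the Cayley-Hamilton identity $\tau^2 = \Tr(\mu_\tau)\tau - \det(\mu_\tau)$ in $C$, where $\mu_\tau$ denotes multiplication by $\tau \in N$ on the rank $2$ module $V$. Locally with $V = \OS e_1 \oplus \OS e_2$, $L = \OS z$, and $f = a e_1^2 z + b e_1 e_2 z + c e_2^2 z$, this specializes to $\tau$ acting as $\begin{pmatrix} 0 & -c \\ a & b\end{pmatrix}$ on $(e_1, e_2)$ and to $\tau^2 = b\tau - ac$ in $C$; traceability of the constructed $M$ is then automatic from the Cayley-Hamilton recipe. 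Mutual inversion, after trivialization, amounts to the classical identity that the norm form of multiplication by a chosen generator on a rank $2$ module over the algebra recovers the original $f$. The main obstacle I anticipate is ensuring canonicity over a base where $2$ is not invertible: the natural isomorphisms between $\wedge^2 C$, $C/\OS$, $\wedge^2 V^* \otimes L^*$, and $\wedge^2 M$ must be chosen coherently in both directions so the round trips become identities on isomorphism classes, and the distinction between $\Sym^2$ as coinvariants and $\Gamma^2$ as divided powers (duality-equivalent for rank $2$ modules but differing by factors of $2$ in coordinates) must be handled with care throughout.
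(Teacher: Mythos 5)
Your overall strategy---explicit local formulas plus a gluing argument, packaged as two mutually inverse constructions---is the same as the paper's, and your reverse direction is essentially the paper's Key Construction: the paper also sets $C=\OS\oplus\wedge^2V^*\tensor L^*$, $M=V$, and writes down the local action ($\tau x=-cy-bx$, $\tau y=ax$, $\tau^2=-b\tau-ac$ in the paper's sign conventions, matching yours up to replacing $\tau$ by $-\tau$). Your use of Cayley--Hamilton is a nice improvement in presentation: it makes the consistency of the module axioms ($\mu_\tau^2=\Tr(\mu_\tau)\mu_\tau-\det(\mu_\tau)\Id$) and the traceability of the constructed module automatic, where the paper simply asserts them. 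Where you genuinely diverge is the forward direction: you recover $f$ from $(C,M)$ as the determinant form $m\mapsto\det(\mu_m)\in(\wedge^2C)^*\tensor\wedge^2M$, a global, basis-free construction, whereas the paper's proof of the theorem chooses a \emph{normalized} local basis $1,\tau$ of $C$ (normalized so that $\tau y$ is a multiple of $x$) and reads off $a,b,c$, deferring basis-independence to ``a simple computation.'' (The paper does give a different global construction in Section~\ref{S:global}, via the antisymmetrization map $C/\OS\tensor\wedge^2M\to\Sym^2M$; your norm form agrees with it after the canonical identifications.) Your version buys canonicity and eliminates the normalization step; the paper's version buys immediate visibility of where traceability enters (it pins down the coefficient of $\tau$ in $\tau^2$, i.e.\ the trace $p+s$ of $\mu_\tau$ on $M$, which the norm form alone does not determine).

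Two points need repair before this is a complete proof. First, your global description of the module action, ``$f$ via the rank-$2$ duality $\wedge^2V^*\tensor V\isom V^*$ produces $N\tensor V\to V$,'' does not actually pin down a map: the obvious candidate, contraction $V^*\tensor\Sym^2V\to V$, sends $e_1^*\tensor(ae_1^2+be_1e_2+ce_2^2)$ to $2ae_1+be_2$ and so disagrees with your (correct) local matrices by factors of $2$; moreover any basis-free recipe from $f$ alone can only determine $\mu_\tau$ modulo $\OS\cdot\Id$, and fixing the scalar part is exactly the normalization/traceability step. So the content really is the local matrix, and the gluing computation (independence under change of basis of $V$ and $L$) must be carried out, just as in the paper. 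Second, in the round trip $(C,M)\mapsto f\mapsto(C',M')$ you must say where traceability is used: the norm form determines only the off-diagonal entries and the difference of diagonal entries of $\mu_\tau$, and it is traceability that recovers $\Tr(\mu_\tau)$ and hence the full action; without it the map $(C,M)\mapsto f$ is not injective (e.g.\ $\OS[\tau]/(\tau^2-\tau)$ acting by $\tau=\Id$ has zero norm form). Neither issue is fatal---both are resolved by exactly the local bookkeeping the paper performs---but as written the proposal asserts rather than proves the mutual inversion.
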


In the case when $S=\Spec \Z$ 
and we 
consider
only non-degenerate objects,
 we recover the classical Theorem~\ref{T:classGL2GL1} (because, in this case, all traceable modules are realizeable as ideal classes).
We give a simple and concrete proof of Theorem~\ref{T:gen} in Section~\ref{S:proof}.  We also reinterpret
the proof in terms of moduli stacks.   Theorem \ref{T:gen} comes from an isomorphism of moduli stacks
parametrizing linear binary quadratic forms on the one hand and parametrizing quadratic algebras and their traceable modules
on the other.  The content of the ``isomorphism of moduli stacks'' result is that the bijection of Theorems 
\ref{T:gen} commutes with base change of the scheme $S$ or ring $R$ (when $S=\Spec R$).
Moreover, we give explicit descriptions of the bijection maps in terms of bases for the ring and module 
 (which describe the map locally on the base scheme) as well as global descriptions of the bijection, both
geometric and algebraic (see Section~\ref{S:global}).

There has been previous work done to generalize Gauss composition to an arbitrary ring (see \cite{Butts}, \cite{Dulin}, \cite{Kap}, \cite{Towber}), usually with conditions
on the base ring (for example, that it is a Bezout domain or that 2 is not a zero-divisor), with conditions
on the forms and modules (for example, free, primitive, irreducible, invertible), or with
orientations of the rings or modules.  In this paper we are able to give a complete theorem without
any such conditions.  The closest previous work is that of Kneser \cite{Kneser}, who works over
an arbitrary ring and gives a construction of quadratic algebras and modules from quadratic maps
using Clifford algebras.  Kneser mainly studies composition of two modules and does not formulate a theorem
in the style of this paper.  Lenstra has given a talk \cite{Lenstra} based on Kneser's work in
which he suggested theorems for primitive forms and invertible modules in the style of this paper.  In Section~\ref{S:Kneser},
we given a closer comparison of our terminology and results to those of Kneser.  Our work introduces definitions and a framework that
are used to study binary forms of degree $n$ in \cite{binarynic}, which gives bijection theorems in the style of
Theorem~\ref{T:gen} for binary forms of degree $n$.  The global geometric and algebraic constructions of this paper
can be generalized to binary forms of degree $n$ and are critical to the study of such forms in \cite{binarynic}.
Besides its independent interest, this paper is an introduction to and motivation for the results of \cite{binarynic}.

A linear binary quadratic form is \emph{primitive} if everywhere locally where $V$ and $L$ are free (and $x,y$ is a basis for $V$ and $z$ is a basis for $L$),
$f$ can be written as $ax^2z+bxyz+cy^2z$, where
$a,b,c$ generate the unit ideal in $\OS$.
 A form over $\Z$ is \emph{primitive} if its coefficients generate the unit ideal in $\Z$, and over $\Z$, primitive forms
correspond exactly to the invertible ideal classes, and we the the same phenomenon happens in general.

\begin{theorem}\label{T:prim}
We can restrict the bijection of Theorem~\ref{T:gen} to a bijection
$$
 \left\{\parbox{2.5 in}{isomorphism classes of primitive linear binary quadratic 
forms/$S$}\right\}\longleftrightarrow \left\{\parbox{2.5 in}{isomorphism classes of $(C,M)$, with $C$ a quadratic algebra$/S$, and
$M$ an invertible $C$-module }\right\}.$$
In particular, if $M$ is an invertible (i.e. locally free rank 1) $C$-module, then $M$ is traceable.

\end{theorem}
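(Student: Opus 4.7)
The plan is to reduce to a local computation via the explicit form of the construction in Theorem~\ref{T:gen}. Both primitivity of $f$ and invertibility of $M$ are conditions that can be checked locally on $S$, and the bijection of Theorem~\ref{T:gen} commutes with base change, so I may assume $V=\OS x+\OS y$ and $L=\OS z$ are free and write $f=ax^2z+bxyz+cy^2z$ with $a,b,c\in\OS$. The local content of Theorem~\ref{T:gen} presents $C$ as $\OS\oplus\OS\tau$ with $\tau$ generating $C/\OS\isom\wedge^2 V^*\tensor L^*$, and equips $M=V$ with an explicit $C$-action whose matrix in the basis $(x,y)$ has entries linear in $a,b,c$ and, reduced modulo a prime $\mathfrak{p}\subset\OS$, is a scalar matrix if and only if $a,b,c$ all vanish in $k(\mathfrak{p})$.

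To detect invertibility I would check at each prime $\mathfrak{q}$ of $\OS$ that $M_\mathfrak{q}$ is free of rank $1$ over $C_\mathfrak{q}$. Since $C_\mathfrak{q}$ and $M_\mathfrak{q}$ are both free of rank $2$ over $(\OS)_\mathfrak{q}$, any surjection $C_\mathfrak{q}\twoheadrightarrow M_\mathfrak{q}$ of $C_\mathfrak{q}$-modules is automatically an isomorphism (its determinant as an $(\OS)_\mathfrak{q}$-linear map is forced to be a unit), so by Nakayama the question reduces to whether $M/\mathfrak{q}M$ is cyclic over $C/\mathfrak{q}C$. A two-dimensional module over a two-dimensional commutative algebra over a field is cyclic exactly when the algebra acts non-scalarly, which by the matrix description above is equivalent to $(a,b,c)\not\subset\mathfrak{q}$. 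Hence $M$ is invertible iff $(a,b,c)$ generates the unit ideal locally, i.e., iff $f$ is primitive.

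For the ``in particular'' assertion, if $M$ is locally free of rank $1$ over $C$, then Zariski-locally $M\isom C$ as $C$-modules, so the trace of $C$ acting on $M$ agrees locally (and thus globally) with the trace of $C$ acting on itself; in particular $M$ is traceable, and Theorem~\ref{T:prim} follows by restricting the bijection of Theorem~\ref{T:gen}. The main obstacle is the first paragraph: extracting enough of the explicit $C$-action on $V$ from the construction of Theorem~\ref{T:gen} to recognize the scalar-matrix criterion. Once that is in hand, the rest is Nakayama's lemma together with elementary linear algebra over a residue field.
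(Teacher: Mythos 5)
Your argument is correct, but it takes a genuinely different route from the paper's. The paper works entirely with explicit bases: given a primitive $f=ax^2z+bxyz+cy^2z$ it covers $S$ by the opens $D_a$, $D_{a+b+c}$, $D_c$, changes basis of $V$ on each piece so that the leading coefficient becomes $1$, and then observes directly that $y$ generates $M$ over $C$ (since $\tau y = x$); conversely, a local $C$-module generator of $M$ yields a basis in which $a=1$, hence a primitive form. You instead prove a fiberwise criterion: $M\tensor k(\mathfrak{q})$ is cyclic over $C\tensor k(\mathfrak{q})$ iff $\tau$ acts non-scalarly there, iff $(a,b,c)\not\subset\mathfrak{q}$, and then lift via Nakayama and the fact that a surjection of free $\OS_\mathfrak{q}$-modules of equal rank is an isomorphism. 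Your version isolates exactly where invertibility fails (the closed locus $a=b=c=0$) and matches primitivity as an ideal-theoretic condition without any covering trick, at the cost of invoking Nakayama and the non-derogatory-matrix criterion; the paper's version is more constructive, exhibiting an explicit generator. Two small points to tighten: (i) your stalkwise conclusion that $M_\mathfrak{q}$ is free of rank $1$ over $C_\mathfrak{q}$ should be promoted to freeness on a Zariski neighborhood (the paper's Notation section flags exactly this distinction), which is routine here since the surjection $C\to M$ sending $1$ to a lift of the cyclic generator has finitely generated cokernel vanishing at $\mathfrak{q}$; (ii) your ``in particular'' paragraph, identical in substance to the paper's remark, implicitly uses that invertibility in the locally-on-$C$ sense implies freeness locally on $S$ --- your Nakayama argument in fact supplies this, since it produces a generator over an open subset of $S$.
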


Note that if $M$ is \emph{locally on $S$} a free $C$-module of rank 1, then $M$
is clearly \emph{locally on $C$} a free $C$-module of rank 1.
It turns out that the converse is also true.  If $M$ is \emph{locally on $C$} a free $C$-module of rank 1, then $M$
is \emph{locally on $S$} a free $C$-module of rank 1.

We can also talk about discriminants of quadratic algebras and linear binary quadratic forms over $S$.
The discriminant of a quadratic $\OS$-algebra $C$ is a global section of
$(\wedge^2 C)^{\tensor -2} \isom (C/\OS)^{\tensor -2}$ given by the determinant of the trace map $C\tensor C\ra\OS$.
The discriminant of a linear binary quadratic form $f\in \Sym^2 V \tensor L$ is a
global section of $(\wedge^2 V \tensor L)\ts$ which is given locally where
 $V$ and $L$ are free by $ax^2z+bxyz+cy^2z$ has discriminant $(b^2-4ac)((x\wedge y)\tensor z)^2$.  
We can view the discriminant (in either case) as a pair $(N,d)$, where $N$ 
is a locally free $\OS$-module of rank 1 and $d\in N\ts$, with isomorphisms given by isomorphisms $N \isom N'$
sending $d$ to $d'$.

\begin{theorem}\label{T:disc}
 In the bijection of Theorem~\ref{T:gen},
the isomorphism $C/\OS \isom \wedge^2 V^* \tesnor L^*$ 
gives a map 
$
(C/\OS)^{\tensor -2} \isom  (\wedge^2 V \tesnor L)\ts
$ which maps the discriminant of $(C,M)$ to the discriminant of $f$.
In other words, the bijection of Theorem~\ref{T:gen} is discriminant preserving.
\end{theorem}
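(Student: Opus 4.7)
The plan is to reduce to a local affine situation in which $V$ and $L$ are free, and then to verify the identity by a direct $2\times 2$ determinant computation. The discriminants of a quadratic algebra and of a linear binary quadratic form, as well as the canonical isomorphism $C/\OS \isom \wedge^2 V^* \tensor L^*$ of Theorem~\ref{T:gen}, are all defined Zariski-locally and are compatible with restriction to open subsets. Since a global section of an invertible $\OS$-module is determined by its restriction to an open cover, it suffices to check the equality on an affine open cover of $S$ over which $V$ and $L$ trivialize.

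Fix such an affine open $\Spec R \subseteq S$ together with bases $x,y$ of $V$ and $z$ of $L$, so that $f = ax^2z + bxyz + cy^2z$ and, by definition, $\disc f = (b^2-4ac)((x\wedge y)\tensor z)\ts$. From the explicit local formulas in Section~\ref{S:proof}, the associated quadratic $R$-algebra $C$ is free with basis $1,\tau$, where $\tau$ satisfies a quadratic relation (of the shape $\tau^2 = b\tau - ac$); this gives $\tr(1) = 2$, $\tr(\tau) = b$, and $\tr(\tau^2) = b^2 - 2ac$. The trace pairing matrix in this basis therefore has determinant
\[
\det \mx{2 & b \\ b & b^2-2ac} = b^2-4ac,
\]
so $\disc C = (b^2-4ac)(\bar\tau^*)\ts$ as an element of $(C/\OS)^{\tensor -2}$, where $\bar\tau$ denotes the image of $\tau$ in $C/\OS$ (using the canonical identification $\wedge^2 C \isom C/\OS$ coming from the short exact sequence $0 \to \OS \to C \to C/\OS \to 0$, which sends $1 \wedge \tau \mapsto \bar\tau$).

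Inspecting the same local formulas, the isomorphism $C/\OS \isom \wedge^2 V^* \tensor L^*$ supplied by Theorem~\ref{T:gen} sends $\bar\tau$ to $(x\wedge y)^* \tensor z^*$, and hence the induced isomorphism $(C/\OS)^{\tensor -2} \isom (\wedge^2 V \tensor L)\ts$ sends $(\bar\tau^*)\ts$ to $((x\wedge y)\tensor z)\ts$. Combining with the previous paragraph, $\disc C$ maps to $(b^2-4ac)((x\wedge y)\tensor z)\ts = \disc f$, which is the desired identity on this open. Functoriality of the bijection and of both discriminants with respect to base change ensures the locally verified identity glues to an equality of global sections on all of $S$.

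The only real obstacle is bookkeeping: one must confirm that the local matchup $\bar\tau \leftrightarrow (x\wedge y)^*\tensor z^*$ extracted from the proof of Theorem~\ref{T:gen} is exactly the canonical isomorphism stated there, and not off by a sign or unit. Once this normalization is pinned down from Section~\ref{S:proof}, the theorem is immediate from the trace-pairing determinant computed above.
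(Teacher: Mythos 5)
Your proof is correct and follows essentially the same route as the paper, which likewise verifies the identity locally on an open cover where $V$ and $L$ are free, using the explicit Key Construction ($C=\OS\oplus\wedge^2V^*\tensor L^*$ with $\bar\tau=(x^*\wedge y^*)\tensor z^*$) and noting that $\disc f=(b^2-4ac)((x\wedge y)\tensor z)^{\tensor 2}=\disc C$ under that identification; you merely spell out the trace-pairing determinant that the paper leaves implicit. The one small slip is the sign in the relation (the paper has $\tau^2=-b\tau-ac$, so $\tr(\tau)=-b$), but since the determinant $2\tr(\tau^2)-\tr(\tau)^2=b^2-4ac$ is unaffected, your conclusion stands.
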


We can specialize Theorem~\ref{T:gen} by specifying the locally free rank 1 module $L$ (see Section~\ref{S:bqf}).
This allows us to give a correspondence for binary quadratic forms over an arbitrary scheme and modules of quadratic algebras.
As another specialization, we get a version of Theorem~\ref{T:classGL2} over an arbitrary base.

Theorem \ref{T:gen} parametrizes traceable modules for quadratic rings over some base, but
the original theorems of Gauss composition are about ideal classes.  We can compare traceable modules and ideal classes.
To do this we work over an integral domain or scheme $D$.  An ideal $I$ of a quadratic $D$-algebra $C$ is \emph{full} if it is a locally
free rank 2 $D$-module.  Two ideals $I$ and $I'$ are \emph{equivalent} if there are non-zero-divisors $c,c'\in C$ such that
$cI=c'I'$, and this equivalence defines ideal classes.
Over an integral domain or scheme, an object is \emph{degenerate} if its discriminant is zero.

\begin{proposition}\label{P:idmod}
 When $D$ is an integral domain or scheme, 
all full ideals of a quadratic algebra $C$ over $D$ are traceable $C$-modules.
When $C$ is non-degenerate, all
traceable $C$-modules are realized as full ideals of $C$.  Two full ideals of
$C$ are in the same ideal class if and only if they are isomorphic as modules.  
\end{proposition}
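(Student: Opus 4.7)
The plan is to study everything after passing to the generic point of $D$. Let $K$ be the fraction field of $D$ in the affine case, or the stalk of $\mathcal{O}_D$ at the generic point in the scheme case, and write $C_K = C \tensor_D K$, $M_K = M \tensor_D K$, and similarly for ideals. All three assertions reduce to a linear-algebra computation over $K$ combined with the fact that a locally free $D$-module of finite rank is torsion-free and embeds in its generic fiber. For Part (1), a full ideal $I \subseteq C$ gives a $K$-subspace $I_K \subseteq C_K$ of $K$-dimension $2$, which forces $I_K = C_K$ inside $C_K$. Multiplication by any $c \in C$ therefore has equal traces on $I_K$ and on $C_K$; since both trace maps $C \to D$ compose with the injection $D \hookrightarrow K$ to give this common function, they already agree in $D$, so $I$ is traceable.

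For Part (2), assume $C$ is non-degenerate, so $C_K$ is \'etale over $K$ and is either a separable quadratic field extension of $K$ or $K \times K$. I claim $M_K \isom C_K$ as $C_K$-modules: in the field case $M_K$ has $K$-dimension $2$ and hence $C_K$-dimension $1$; in the split case the idempotents $e_1, e_2 \in C_K$ act on $M_K$ with trace $1$ each by the traceable hypothesis, forcing $\dim_K e_i M_K = 1$, and again $M_K \isom C_K$. Fix such an isomorphism $\phi : M_K \to C_K$ and compose with the inclusion $M \hookrightarrow M_K$ to realize $M$ as a $C$-submodule of $C_K$. Clearing denominators by a nonzero $d \in D$ produces $dM \subseteq C$, a full ideal, and multiplication by $d$ gives a $C$-linear isomorphism $M \isom dM$, using that $d$ is a non-zero-divisor on $C$ since $C$ is flat over $D$; in the scheme case this clearing step is performed locally on $D$.

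For Part (3), the relation $cI = c'I'$ produces $C$-module isomorphisms $I \isom cI = c'I' \isom I'$ via multiplication by $c$ and by $c'$. Conversely, a $C$-linear isomorphism $\psi : I \to I'$ extends to a $C_K$-linear automorphism $\psi_K$ of $C_K = I_K = I'_K$ (Part (1)), which must be multiplication by some unit $u \in C_K^{\times}$. Writing $u = c'/c$ with $c \in D \setminus \{0\}$ forces $c' = uc \in C$ to be a non-zero-divisor, and the identification $\psi(I) = I'$ inside $C_K$ becomes $c' I = c I'$ in $C$. The main obstacle is the claim $M_K \isom C_K$ in Part (2), which is the unique step using both the traceable hypothesis on $M$ and the non-degeneracy of $C$; everything else is bookkeeping with flatness and clearing denominators.
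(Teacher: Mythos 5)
Your proof is correct and follows essentially the same route as the paper's: pass to the fraction field $K$, show $M\tensor_D K\isom C\tensor_D K$ as $C\tensor_D K$-modules, embed $M$ in $C\tensor_D K$ and clear denominators, and compare the two trace maps after the identification $I\tensor_D K = C\tensor_D K$. The only (harmless) variations are in the mechanics: you establish $M_K\isom C_K$ via the \'etale dichotomy (field versus $K\times K$ with idempotent traces) where the paper puts $\tau$ into rational normal form and matches coefficients, and in the converse of the ideal-class statement you realize $\psi_K$ as multiplication by a unit $u=c'/c$ where the paper picks a nonzero $d\in D\cap I$ and checks $\phi(d)I=dJ$ directly.
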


\begin{corollary}\label{C:ideals}
When $D$ is an integral domain or scheme, there is a discriminant preserving bijection
$$
 \left\{\parbox{2.5 in}{isomorphism classes of non-degenerate linear binary quadratic 
forms/$D$}\right\}\longleftrightarrow \left\{\parbox{2.5 in}{isomorphism classes of $(C,I)$, with $C$ a non-degenerate quadratic algebra$/D$, and
$I$ a full ideal class of $C$ }\right\}.$$
\end{corollary}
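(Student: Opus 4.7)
The plan is to deduce Corollary~\ref{C:ideals} by combining the main Theorem~\ref{T:gen} with Proposition~\ref{P:idmod}, and to invoke Theorem~\ref{T:disc} for the discriminant-preserving property.

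First, I would start from Theorem~\ref{T:gen}, which gives a bijection between isomorphism classes of linear binary quadratic forms over $D$ and isomorphism classes of pairs $(C,M)$ with $C$ a quadratic $D$-algebra and $M$ a traceable $C$-module. Since the discriminant of such a pair is by definition the discriminant of $C$, and since Theorem~\ref{T:disc} shows the bijection preserves discriminants, restricting to non-degenerate forms on the left corresponds exactly to restricting to pairs with $C$ non-degenerate on the right. This carves out matching non-degenerate loci on the two sides.

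Next, I would use Proposition~\ref{P:idmod} to replace ``traceable $C$-module'' with ``full ideal class of $C$'' on the right-hand side. For a non-degenerate quadratic algebra $C$, Proposition~\ref{P:idmod} provides a surjection from full ideals of $C$ to traceable $C$-modules (every traceable $M$ is realized as a full ideal, and every full ideal is traceable), whose fibres are exactly the ideal classes (two full ideals are isomorphic as $C$-modules if and only if they lie in the same ideal class). Hence for each non-degenerate $C$ the set of full ideal classes is in bijection with the set of isomorphism classes of traceable $C$-modules, and this assembles into a bijection between isomorphism classes of pairs $(C,M)$ and isomorphism classes of pairs $(C,I)$ on the non-degenerate locus.

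Composing these two bijections yields the bijection of Corollary~\ref{C:ideals}, and discriminant preservation is inherited from Theorem~\ref{T:disc}. The main point where one might worry is that the step ``replace $M$ by a choice of ideal realization $I$'' be well-defined on isomorphism classes of pairs rather than depending on auxiliary choices; but this is precisely the content of the final sentence of Proposition~\ref{P:idmod}, namely that the ideal-class equivalence (via multiplication by non-zero-divisors of $C$) coincides with abstract $C$-module isomorphism. Thus the assignment $M \mapsto I$ descends unambiguously to isomorphism classes, and no further argument is needed beyond stringing these results together.
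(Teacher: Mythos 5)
Your proposal is correct and follows exactly the paper's route: the paper deduces Corollary~\ref{C:ideals} immediately from Theorem~\ref{T:gen} (with discriminant preservation from Theorem~\ref{T:disc}) together with Proposition~\ref{P:idmod}, which is precisely the composition of bijections you describe. Your added remark about well-definedness on isomorphism classes is a sensible elaboration of what the paper leaves implicit, but it is the same argument.
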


Note that we do not require $C$ to be an integral domain or scheme.  When $C$ is a degenerate quadratic $D$-algebra, there
are traceable modules which do not occur as ideals of $C$.  However, since these modules do correspond
to linear binary quadratic forms we see that traceable modules are naturally included in the most complete theorem.

\subsection{Outline of the paper}
In Section~\ref{S:proof}, we prove Theorems~\ref{T:gen}, \ref{T:prim}, and \ref{T:disc} and give 
a local explicit description of the bijection of Theorem~\ref{T:gen}.  In Section~\ref{S:global},
we give a global geometric construction of $(C,M)$ from a linear binary quadratic form and
a
global algebraic construction of a linear binary quadratic form from a pair $(C,M)$.  In Section~\ref{S:idmod},
we relate traceable modules to ideals in order to prove Proposition~\ref{P:idmod} and Corollary~\ref{C:ideals}.  We also give Theorem~\ref{T:gen} over $\Z$
and see that not all the modules in the theorem are realizable as ideals.
In Section~\ref{S:bqf}, we specialize Theorem~\ref{T:gen} to forms $f\in\Sym^2 V \tesnor L$ with a given $L$,
and recover a version of Theorem~\ref{T:classGL2} over an arbitrary scheme.
Finally, in Section~\ref{S:Kneser} we relate our terminology and results to those of Kneser \cite{Kneser},
who worked on Gauss composition over an arbitrary base ring.
 
\begin{notation}
Given an $\OS$-module $P$, we let
$P^*$ denote the $\OS$-module $\mathcal{H}om(P,\OS)$, even
when $P$ is also a module for some other $\OS$-algebra.
Given a sheaf $\mathcal{G}$ on $S$, we write
$x\in \mathcal{G}$ to denote that $x$ is a global section of $\mathcal{G}$.
We use $\Sym_k V$ to denote the submodule of $V^{\tensor k}$ that is fixed by the $S_k$ action, and
we use   $\Sym^k V$ to denote the usual quotient of $V^{\tensor k}$.

 Normally, in the language of algebra,
 one says that an $R$-module $M$ is locally free of rank $n$
if for all prime ideals $\wp$ of $R$, the localization $M_\wp$ is free of rank $n$; in the
geometric language we would describe this situation as ``$M$ is free in every stalk.''
However, if we have a scheme $S$ and an $\OS$-module $M$, we normally
say that $M$ is locally free of rank $n$ if on some open cover of $S$ it is free of rank $n$;
in the algebraic language this is equivalent to saying that for every prime ideal $\wp$ of $R$, 
there is an $f\in R \setminus \wp$ such that the
 localization $M_f$ is free of rank $n$.  It turns out that over a ring, 
the geometric condition of locally free of rank $n$ is equivalent to being finitely 
generated and having the algebraic condition of locally free of rank $n$ (\cite[II.5.3, Theorem 2]{B}).
In this paper, we use the geometric notion of locally free of rank $n$, even when working over a ring. 
\end{notation}

\section{Proof of Theorem~\ref{T:gen}}\label{S:proof}

We give a simple proof of Theorem~\ref{T:gen}.

\begin{keycon}
Given a linear binary quadratic form $f\in \Sym^2 V\tesnor L$, we we construct
$C$ and $M$ as $\OS$-modules as follows:
\begin{equation}\label{E:defC}
 C= \OS \oplus \wedge^2 V^* \tesnor L^* \qquad M =V.
\end{equation}
Next, we need to specific the algebra and $C$-module structure of $C$ and $M$ respectively.
First, consider the case then $V$ and $L$ are free such that
$V=\OS x\oplus\OS y$ and $L=\OS z$.  We rename $(1,0)$ 
and $(0,(x^* \wedge y^* )\tensor z^*)$
of $C$ to $1$ and $\tau$.  Suppose $f=ax^2z+bxyz+cy^2z$. 
Now we let $1$ be the multiplicative identity of $C$, and let the rest of
the algebra and module structures be as follows:
$$\tau^2=-b\tau-ac \qquad \qquad\tau x=-cy-bx\qquad\qquad\tau y=ax$$
This gives $M$ the structure of a traceable $C$-module. 
Also note that $\disc f=(b^2-4ac)((x\wedge y)\tensor z)^2=\disc C$.

For a general $f$, we need to specify the algebra and module structures of $C$ and $M$ by
giving $C\tesnor C \ra C$ and $C\tensor M \ra M$ satisfying the axioms of rings and modules.
Since the module of such homomorphisms is a sheaf, it suffices to give the algebra and module structures locally
when $V$ and $L$ are free, which is what we have done above.  To see that the local definitions agree on overlaps,
we just check that if we had chosen different bases for the free $V$ and $L$ that we would have gotten the
same algebra and module structure on $C$ and $M$.  This is a simple computation.  Also
note that $\disc f$ and $\disc C$ correspond in the isomorphism
$(\wedge^2 V \tensor L)\ts\isom (C/\OS)^{\tesnor -2}$ because they correspond locally.
\end{keycon}

Given a quadratic $\OS$-algebra $C$ and a traceable $C$-module $M$
we can construct $\OS$-modules $V=M$ and $L=\wedge^2 V^* \tesnor (C/\OS)^*$.
In the case that
$C$ and $M$ are free $\OS$-modules, we can choose bases for $1,\tau$, and $x,y$
for $C$ and $M$ respectively, such that 
$$\tau x=-cy-bx\quad\text{and}\quad\tau y=ax$$
for some $a,b,c\in\OS$.  (Shifting $\tau$ by an element of
$\OS$ if necessary, we can ensure that $\tau y$ is a multiple of $a$
and we call such a basis $1,\tau$ \emph{normalized}.)  If $\tau^2=-q\tau-r$ with $q,r\in\OS$, then
the traceability condition tells us that $q=b$ and the condition
that $\tau^2=-q\tau-r$ tells us that $r=ac$.  
From this $(C,M)$ we can construct a form
$ax^2z+bxyz+cy^2z$, where $z=x^* \wedge y^* \tensor \bar{\tau}^*$ 
and $\bar{\tau}$ is the image of $\tau$ in $C/\OS$.

Now for an arbitrary $(C,M)$, this construction specifies $f\in \Sym^2 V \tesnor L$
locally on $S$ where $C$ and $M$ are free $\OS$-modules.  To see that the local definitions of
$f$ agree on overlaps, we can do a simple computation to see that if we had chosen a different 
basis for $M$, and a different normalized basis for $C$, we would obtain the same form.

The constructions of the above two paragraphs are inverse to each other, because we 
see they are locally inverse by construction.  Thus we have proved the bijection of Theorem~\ref{T:gen},
as well as Theorem~\ref{T:disc} which says that Theorem~\ref{T:gen} is discriminant preserving.

\subsection{Primitive forms}
If we had a form such that $V$ and $L$ were free as above and $f=ax^2z+bxyz+cy^2z$
with $a,b,c$ generating $\OS$ as an $\OS$-module, then we also have
$a,a+b+c,c$ generating $\OS$ as an $\OS$-module.  We can cover $S$ by
subsets $D_a,D_{a+b+c},D_c$ on which $a,a+b+c,c$ are invertible respectively.  By changing
the basis of $V$ on $D_{a+b+c}$ and $D_c$ we can assume that $a$ is invertible in each open subset.
By changing the basis of $V$ again, we can assume that $a=1$.  When $a=1$, we see
that $M$, as given by the Key Construction, is a free rank 1 $C$-module.  

On the other hand, if $C$ is a free $\OS$-module and if $M$ is a free 
$C$-module of rank 1, then we can choose a $\OS$-module basis $x$ and $y$ of $M$
and a $\OS$-module basis $1,\tau$ of $C$ such that 
$$\tau^2=-b\tau-c, \quad \tau x=-cy-bx\quad\text{and}\quad\tau y=x$$
with $b,c\in\OS$.  Such a $(C,M)$ gives a form $x^2z+bxyz+cy^2z$, which is primitive.
So we see that in the bijection of Theorem~\ref{T:gen}, primitive forms correspond exactly to $(C,M)$
such that locally on $S$, we have $M$ a free $C$-module of rank 1.  Note that
any module which is locally on $S$ a free $C$-module of rank 1 is traceable.  
Thus we conclude that in the bijection of Theorem~\ref{T:gen}, primitive forms correspond to $M$ which are locally on $S$ free $C$-modules of rank 1, proving Theorem~\ref{T:prim}

\subsection{Moduli Stacks}
Another way to see the proof of Theorem~\ref{T:gen} is as follows.
\begin{theorem}\label{T:stacks}
 There is an equivalence of moduli stacks between the moduli stack of linear binary quadratic forms
and the moduli stack of pairs $(C,M)$ where $C$ is a quadratic algebra and $M$ is a traceable $C$-module.
\end{theorem}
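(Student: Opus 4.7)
The plan is to upgrade the bijection of Theorem~\ref{T:gen} to an equivalence of fibered categories by observing that every construction in the Key Construction is manifestly natural with respect to base change and with respect to isomorphisms of the input data. First I would spell out the two stacks: objects of the first over a scheme $T$ are triples $(V,L,f)$ with $V$ locally free of rank $2$, $L$ locally free of rank $1$, and $f\in\Sym^2 V\tensor L$; objects of the second are pairs $(C,M)$ with $C$ a quadratic $\mathcal{O}_T$-algebra and $M$ a traceable $C$-module; morphisms on both sides are isomorphisms over $T$, and both stacks are visibly fibered in groupoids over the category of schemes.

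Next, I would promote the Key Construction to a morphism of stacks. Given $(V,L,f)$ over $T$, the module $C=\mathcal{O}_T\oplus\wedge^2 V^*\tensor L^*$ and $M=V$ are built from direct sums, exterior powers, duals, and tensor products of locally free sheaves, all of which commute with pullback along any $g\colon T'\to T$. The algebra multiplication $C\tensor C\to C$ and the action $C\tensor M\to M$ were constructed in the Key Construction as sheaf homomorphisms by gluing local definitions; since these local formulas are given by universal polynomials in the coefficients of $f$, they pull back to the analogous formulas for $g^*f$, so the whole assignment $(V,L,f)\mapsto (C,M)$ is a functor from the first stack to the second commuting with pullback up to canonical isomorphism.

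I would then check that this functor is fully faithful on morphisms. An isomorphism $(\phi,\psi)\colon(V,L,f)\to(V',L',f')$ induces an isomorphism of the data $(\mathcal{O}_T\oplus\wedge^2V^*\tensor L^*,V)\to(\mathcal{O}_T\oplus\wedge^2(V')^*\tensor (L')^*,V')$ via $\phi$ and the transpose of $\wedge^2\phi\tensor\psi$, and the condition $\psi\cdot\Sym^2\phi(f)=f'$ translates, locally in a free basis, into the statement that the induced maps are compatible with the multiplication and action defined by the Key Construction; conversely, an isomorphism of $(C,M)$ must send a normalized basis to a normalized basis, hence determines a unique isomorphism of the associated forms. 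The inverse functor, built from the reverse construction in Section~\ref{S:proof}, is handled symmetrically, and the composites are canonically the identity because they are so locally by Theorem~\ref{T:gen}.

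The main obstacle is avoiding any appeal to a choice of basis in the globalized argument, since such a choice can obstruct naturality under base change and under automorphisms. The cleanest route is to invoke the global algebraic construction of $f$ from $(C,M)$ promised in Section~\ref{S:global}, which expresses the multiplication of $C$ and the action on $M$ in basis-free terms using the trace pairing and the identification $\wedge^2V^*\tensor L^*\isom C/\mathcal{O}_S$; both functors then visibly factor through operations on locally free sheaves that commute with arbitrary pullback, so the equivalence on objects of Theorem~\ref{T:gen} automatically promotes to an equivalence of stacks.
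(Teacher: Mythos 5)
Your proposal is correct in substance, but it packages the argument differently from the paper. The paper's proof proceeds by rigidification: it identifies the moduli scheme of forms equipped with trivializations $V\isom\OS^2$ and $L\isom\OS$ with $\A^3$, identifies the moduli scheme of pairs $(C,M)$ equipped with trivializations $C/\OS\isom\OS$ and $M\isom\OS^2$ with the same $\A^3$, puts a $\GL_2\times\GL_1$ action on both, and observes that the Key Construction applied to the universal form is equivariant; the equivalence of the quotient stacks is then formal. You instead work directly with the two fibered categories, promoting the Key Construction to a morphism of stacks by checking compatibility with pullback and with isomorphisms of the input, and exhibiting the reverse construction as an inverse functor. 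Both routes reduce to the same finite verification, namely that the local formulas of the Key Construction transform correctly under change of basis of $V$ and $L$ --- that is exactly the equivariance statement in the paper's proof --- so neither is a shortcut. The quotient-stack packaging absorbs descent and naturality into the formalism once the single computation over $\A^3$ is done, whereas your direct approach must take care (as you do in your final paragraph) that the globalized functors are basis-free, for which falling back on the global constructions of Section~\ref{S:global} or on the sheaf-gluing already present in the Key Construction is the right move. One small point of framing: rather than ``fully faithful,'' what your argument actually establishes is a pair of mutually quasi-inverse functors, which is the stronger statement you need and is also what the paper's equivariant-isomorphism argument delivers.
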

\begin{proof}
 This is just another way of formalizing the above argument.  We can rigidify the two moduli problems of the theorem. 
 We have that $\A^3$ is the moduli scheme $\mathcal{F}_B$ of linear binary quadratic forms $(V,L,f)$ on $S$ with 
given $V\isom\OS^2$ and $L\isom \OS$.  
This is parametrizing linear binary forms with $V$ and $L$ free and with chosen bases.
Also, we saw above that
$\A^3$ is the moduli scheme $\mathcal{M}_B$ of
pairs $(C,M)$ where $C$ is a quadratic $\OS$-algebra, $M$ is a traceable $C$-module, and we have given isomorphisms
$C/\OS \isom \OS$ and $M \isom \OS^2$.  
This is parametrizing quadratic $\OS$-algebras $C$ with traceable modules $M$, with
$C/\OS$ and $M$ free and with chosen bases.
  An element $g\in \GL_2$ acts on 
the isomorphisms of $M$ and $V$ with $\OS^2$  by composing with $g$
and acts on the isomorphism $C/\OS \isom \OS$ by composing with $\det g^{-1}$.
An element $g\in \GL_1$ acts on the isomorphism $C/\OS \isom \OS$ by composing with $g^{-1}$.
This gives an action of the group scheme $\GL_2\times\GL_1$ on both of the rigidified moduli spaces.
  The Key Construction of $(C,M)$ from the universal form gives us
$\mathcal{F}_B\isom\mathcal{M}_B$ which is equivariant for the $\GL_2\times\GL_1$ actions.
Thus we have an equivalence of the quotient stacks by $\GL_2\times\GL_1$, which are the moduli stacks in this theorem.
\end{proof}

The bijection of Theorem~\ref{T:gen} is a corollary of this theorem by comparing $S$ points of the two moduli stacks.

\section{Global Descriptions of the Bijection}\label{S:global}
We have proven Theorem~\ref{T:gen}, and the
maps in the bijection are given locally in a simple and completely explicit form
by the Key Construction.
We wish now to give global descriptions of the maps in each direction in our bijections.

A linear binary quadratic form $f$ over $S$
defines a closed subscheme $S_f$ of $\P(V)$, where we use $\P(V)$ to denote $\Proj (\Sym V)$.
We let $\mathcal{O}(k)$ denote the usual sheaf on $\P(V)$ and
let $\OSf(k)$ denote the pullback of $\mathcal{O}(k)$ to $S_f$.
  The global functions of
$S_f$ give an $\OS$-algebra and the global sections of $\OSf(1)$
give a module for that algebra.  Whenever 
$S=\Spec R$ and
$f$ is not a zero-divisor,
this algebra and module are the $(C,M)$ given locally by the Key Construction.
When, for example, $f=0$, the global functions of $\Sf$ are
$\OS$, which is not a quadratic $\OS$-algebra.  In order to extend these simple
and natural constructions of $C$ and $M$ to forms $f$ that may be zero or zero divisors,
we use cohomology.  In the case of binary cubic forms, such a construction
of the ring $C$ was given by Deligne in a letter to Gan, Gross, and Savin \cite{Delcubic}.  

\begin{globcon}
 Let $\pi : \P(V) \ra S$.
We can construct
\begin{equation}\label{E:C}
C:=H^0R\pi_* \left(\mathcal{O}(-2)\tensor \pi^*L^* \stackrel{f}{\ra} \mathcal{O}\right),
\end{equation}
and 
\begin{equation}\label{E:M}
M:=H^0R\pi_* \left(\mathcal{O}(-1)\tensor \pi^*L^* \stackrel{f}{\ra} \mathcal{O}(1)\right).
\end{equation}
Above we are taking the hypercohomology of complexes with terms in degrees -1 and 0.
\emph{When $f$ is not a zero-divisor, Equations~\eqref{E:C} and \eqref{E:M} just say that $C$ is the pushforward of the functions of
$S_f$, the subscheme of $\P(V)$ cut out by $f$, to $S$.  Also, in this case, $M$ is the pushforward
of $\OSf(1)$ to $S$.}  This is because when 
 the map $\mathcal{O}(k)\tensor \pi^*L^* \stackrel{f}{\ra} \mathcal{O}(k+2)$ is injective, 
the complex $\mathcal{O}(k)\tensor \pi^*L^* \stackrel{f}{\ra} \mathcal{O}(k+2)$ is 
chain homotopic to $\mathcal{O}(k+2) / f(\mathcal{O}(k)\tensor \pi^*L^*)\isom \mathcal{O}_\Sf(k)$ (as a chain complex in the 0th degree).
Thus when $f$ is not a zero-divisor, we have
$C\isom\pi_*(\mathcal{O}_{\Sf})$ and $M\isom\pi_*(\mathcal{O}_{\Sf}(1))$.

We see that there is an associative product on the complex $\mathcal{O}(-2)\tensor \pi^*L^* \stackrel{f}{\ra} \mathcal{O}$, as it is the Kozul complex of one form.  Explicitly, the product is
given by the action of $\mathcal{O}$ on $\mathcal{O}(-2)$, which gives $C$ the structure of a ring.
The map of $\mathcal{O}$
as a complex in degree 0 to the complex $\mathcal{O}(-2) \stackrel{f}{\ra} \mathcal{O}$
induces $\mathcal{O}_\BS\isom R^0 \pi_*(\mathcal{O}) \ra C$, which makes $C$ into an $\OS$-algebra.
The $C$-module structure on $M$ 
 is given by the following
action of the complex $\mathcal{O}(-2)\tensor \pi^*L^* \stackrel{f}{\ra} \mathcal{O}$
on the complex 
 $\mathcal{O}(-1)\tensor \pi^*L^* \stackrel{f}{\ra} \mathcal{O}(1):$
 \begin{align*}
 \mathcal{O}\tensor (\mathcal{O}(-1)\tensor \pi^*L^*) \ra \mathcal{O}(-1)\tensor \pi^*L^* \qquad \mathcal{O}\tensor (\mathcal{O}(-1)\tensor \pi^*L^*) \ra \mathcal{O}(-1)\tensor \pi^*L^*\\
 (\mathcal{O}(-2)\tensor \pi^*L^*)\tensor \mathcal{O}(1) \ra \mathcal{O}(-1)\tensor \pi^*L^*
 \qquad (\mathcal{O}(-2)\tensor \pi^*L^*)\tensor (\mathcal{O}(-1)\tensor \pi^*L^* )\ra 0,
 \end{align*}
 where all maps are the natural ones.
\end{globcon}

From the short exact sequence of complexes in degrees
-1 and 0
\begin{equation}\label{E:SES2}
\begin{CD}
 @. \mathcal{O}(k+2) \\
@. @VVV \\
\mathcal{O}(k) @>f>> \mathcal{O}(k+2) \\
@VVV @.  \\
\mathcal{O}(k) @. 
\end{CD}
\end{equation}
(where each complex is on a horizontal line and $k=-1$ or $-2$),
we can apply the long exact sequence of cohomology to obtain the exact sequences
\begin{align*}
 0\ra \OS \ra 
C& \ra 
(\wedge^2 V^*)\tensor L^*
\ra 0 
\\
0\ra  V \ra 
M& \ra  0.
\end{align*}
Thus we have natural isomorphisms of $\OS$-modules
\begin{equation}\label{E:lesres}
 C/\OS \isom (\wedge^2 V^*)\tensor L^* \quad \text{and}\quad V \isom M
\end{equation}
as claimed in Theorem~\ref{T:gen}. 
So $C$ and $M$ constructed here have the same $\OS$-module structure as given in the Key Construction,
and it also turns out that the algebra and $C$-module structures are also the same.

\begin{proposition}
 The constructions of $C$ and $M$ from a form $f$ given in the Global Construction
commute with base change and are the same as the Key Construction.
\end{proposition}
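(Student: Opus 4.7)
The plan is to treat commutation with base change separately from the identification with the Key Construction. For base change, I would observe that the Global Construction takes hypercohomology of a two-term complex $\mathcal{O}(k)\tesnor\pi^*L^*\stackrel{f}{\ra}\mathcal{O}(k+2)$ on the projective line bundle $\pi:\P(V)\ra S$ (for $k=-2$ or $-1$). By the projection formula, each term's derived pushforward is a twist by $L^*$ of $R^\bullet\pi_*\mathcal{O}(k)$, which on $\P^1_S$ is always locally free of constant rank on $S$ (either $\Sym^k V$ in degree $0$ or the Serre dual in degree $1$), and therefore commutes with arbitrary base change $T\ra S$ by cohomology and base change. The hypercohomology spectral sequence then shows that the formation of $C$ and $M$ commutes with base change, and since all the algebra and $C$-module structure maps are themselves described by maps of two-term complexes, base change also preserves these structures.

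For the identification with the Key Construction, I would first note that the Key Construction is manifestly natural under base change: its recipe is written in terms of the local structure constants $a,b,c$ of the form in chosen bases, and these constants simply pull back. Combined with the previous step, the equality of the two constructions need only be verified locally on an open cover where $V\isom\OS^2$ and $L\isom\OS$ with chosen bases, and by naturality it is enough to handle the single \emph{universal} form $f=ax^2z+bxyz+cy^2z$ over the base $\Spec\Z[a,b,c]$: every local free instance arises by pullback along the map classifying its coefficients.

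Over this universal base the form $f$ is a non-zero divisor in $\mathcal{O}_{\P(V)}$ (after trivializing, it is a non-zero element of the polynomial ring in each of the two standard affine charts), so, as already remarked in the Global Construction, the two complexes are quasi-isomorphic in degree $0$ to the quotients $\OSf$ and $\OSf(1)$, and one has simply $C=\pi_*\OSf$ and $M=\pi_*\OSf(1)$ with their usual algebra and module structures. Using the short exact sequences \eqref{E:SES2} to pick out a lift $\tau\in C$ of the canonical generator of $C/\OS$ together with the identification $V\isom M$, an explicit \v{C}ech calculation on $\{D_+(x),D_+(y)\}\subset\P^1_{\Z[a,b,c]}$ shows that after a normalization $\tau\mapsto\tau+\lambda$ with $\lambda\in\OS$, one gets exactly $\tau x=-cy-bx$, $\tau y=ax$, and $\tau^2=-b\tau-ac$, which are the Key Construction's structure constants. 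The main obstacle is this last piece of \v{C}ech bookkeeping; once it is carried out on the universal form, the base change step of the first paragraph propagates the identification to every $(V,L,f)$ over every $S$.
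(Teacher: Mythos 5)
Your proposal is correct and follows the same overall skeleton as the paper's proof: establish that the hypercohomology construction commutes with base change, use this (together with the induced bases from Equation~\eqref{E:lesres}) to reduce the comparison to the universal form over $\Spec\Z[a,b,c]$, and verify the claim there. The two places where you diverge are minor but worth noting. For base change, the paper shows directly that $H^{i}R\pi_*(\mathcal{C}(k))$ vanishes for $i\neq 0$ and is locally free for $i=0$, then invokes \cite[Corollaire 6.9.9]{EGA3}; you instead apply the projection formula termwise and run the hypercohomology spectral sequence, which amounts to the same computation packaged differently. More substantively, for the final verification on the universal form the paper simply cites the general degree-$n$ result \cite[Theorem 2.4]{binarynic}, whereas you carry out the \v{C}ech computation on $\{D_+(x),D_+(y)\}$ explicitly; this makes your argument self-contained, and the computation does close up as you claim (the correct lift is $\tau=at$, which is regular on $D_+(x)$ since $at=-b-c(y/x)$ modulo $f$, and one checks $\tau^2=-b\tau-ac$, $\tau x=-bx-cy$, $\tau y=ax$, matching the Key Construction). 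The one point you should not gloss over is the normalization you defer to ``\v{C}ech bookkeeping'': you must check that $\tau=at$ maps to the canonical generator $(x^*\wedge y^*)\tensor z^*$ of $C/\OS\isom\wedge^2V^*\tensor L^*$ under the boundary map of the long exact sequence coming from \eqref{E:SES2}, since the identification of the two constructions as maps of rigidified moduli spaces depends on that sign convention.
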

\begin{proof}
They key to this proof is to compute all cohomology of the pushforward 
of the complex $\mathcal{C}(k): \mathcal{O}(k-2)\tensor \pi^*L^* \stackrel{f}{\ra} \mathcal{O}(k)$ for $k=0,1$.
This can be done using the long exact sequence of cohomology from the short exact sequence
of complexes given in Equation~\eqref{E:SES2} above.  
In particular, $\mathcal{C}(k)$ 
 does not have any cohomology
in degrees other than 0.  
Since $k\leq 1$, we have that 
$H^{0}R\pi_*(\mathcal{O}(-2+k))=0$
and thus
$H^{-1}R\pi_*(\mathcal{C}(k))=0$.
Since, $k\geq -1$ we have that $H^{1}R\pi_*(\mathcal{O}(k))=0$ and thus
$H^{1}R\pi_*(\mathcal{C}(k))=0$.  Moreover, we saw above that
$H^{0}R\pi_*(\mathcal{C}(k))$ is locally free.  Thus since all $H^{i}R\pi_*(\mathcal{C}(k))$ are flat,
by \cite[Corollaire 6.9.9]{EGA3}, we have that cohomology and base change commute.
Base change respects the induced maps between cohomology sheaves
that gave $C$ and $M$ algebra and module structures, respectively, as well
as the maps in Equation~\eqref{E:lesres}.

As in the Key Construction of $(C,M)$ from a form, this construction lifts,
using Equation~\eqref{E:lesres},
to a map $\mathcal{F}_B\ra\mathcal{M}_B$ of the rigidified moduli spaces.
This is because Equation~\eqref{E:lesres} gives a basis for $C/\OS$ and $M$ from a basis of
$V$ and $L$.
So, it suffices to check that 
on the universal linear
binary quadratic form
this construction gives the universal pair $(C,M)$.  This is proven more generally
in an analogue for binary forms of degree $n$ in \cite[Theorem 2.4]{binarynic}.
\end{proof}

\begin{globcon}
 We now give a global construction of a linear binary quadratic form from quadratic ring and module.
Let $C$ be a quadratic $\OS$-algebra and $M$ be a traceable $C$-module.
There is a natural map
$$\map{C/\OS \tensor \wedge^2_\OS M}{ \Sym^2 M}{\gamma\tensor m_1 \wedge m_2}{\gamma m_1\tensor m_2 -\gamma m_2\tensor m_1}.$$
We define $V=M$ and $L= (C/\OS \tensor \wedge^2_\OS M)^*$ and then the map above gives us an element $f\in \Sym^2 V \tensor L$.
\end{globcon}

\begin{remark}
We can rewrite this construction as 
$$\map{C/\OS \tensor \Sym_2 M }{ \wedge^2_\OS M}{\gamma\tensor m_1  m_2}{\gamma m_1\wedge  m_2 }.$$
Using the isomorphism $\Sym_2 M^* \tesnor \wt M \isom \Sym_2 M \tensor \wt M^*$, this gives a binary form
of the required form, which one can check is equivalent to the one given above.

\end{remark}

This construction clearly commutes with base change.  
Also, it gives a map $\mathcal{F}_B\ra\mathcal{M}_B$ of the moduli
space of forms with $V$ and $L$ free with chosen basis to the moduli space
of quadratic algebras and traceable modules with $C$ and $M$ free with chosen basis. 
We can easily check that on the universal $(C,M)$, this construction
gives the universal linear binary quadratic form, and thus is inverse to the Key Construction.  

\section{Ideals and Modules}\label{S:idmod}

We now relate traceable modules to ideals.
 Recall that an ideal $I$ of a quadratic $\OS$-algebra $C$ is \emph{full} if it is a locally
free rank 2 $\OS$-module.  Two ideals $I$ and $I'$ are \emph{equivalent} if there are non-zero-divisors $c,c'\in C$ such that
$cI=c'I'$, and this equivalence defines ideal classes.
Over an integral domain or scheme, an object is \emph{degenerate} if its discriminant is zero.
We prove the following proposition given in the introduction.

\begin{proposition}[Proposition \ref{P:idmod}]
 When $D$ is an integral domain or scheme, 
all full ideals of a quadratic algebra $C$ over $D$ are traceable $C$-modules.
When $C$ is non-degenerate, all
traceable $C$-modules are realized as full ideals of $C$.  Two full ideals of
$C$ are in the same ideal class if and only if they are isomorphic as modules.  
\end{proposition}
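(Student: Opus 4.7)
The proposal is to handle the three assertions in order by systematically reducing everything to the semisimple algebra obtained by extending scalars to the field of fractions $K$ of $D$ (or at the generic point of a scheme $D$). All three assertions have local nature on $D$, so I may assume $D$ is an integral domain with fraction field $K$.

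For the first assertion, let $I\subset C$ be a full ideal. Since $I$ is a $C$-submodule of $C$ of the same $\OS$-rank, $I\tensor_D K$ is a $C\tensor_D K$-submodule of $C\tensor_D K$ of the same $K$-dimension, hence equal to $C\tensor_D K$. For $c\in C$, multiplication by $c$ on $I$ and on $C$ therefore become the same $K$-linear endomorphism after tensoring with $K$, and so have the same trace in $K$. Because $C$ is locally free over the domain $D$, the trace of multiplication by $c$ on $I$ lies in $\OS$ and injects into $K$, so traceability follows.

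For the second assertion, non-degeneracy of $C$ forces $C\tensor_D K$ to be an étale $K$-algebra; it is either a separable quadratic extension of $K$ or isomorphic to $K\times K$. In either case, the only rank-$2$ module over $C\tensor_D K$ (as a $K$-vector space) whose trace pairing coincides with that of $C\tensor_D K$ is $C\tensor_D K$ itself; for the split case one checks the two rank-$1$ projections have trace $2a$ versus $a+b$, which rules them out, and in the field case rank considerations force the module to be free of rank one. Thus for a traceable $M$ one obtains a $C\tensor_D K$-module isomorphism $\phi: M\tensor_D K \isom C\tensor_D K$. Clearing denominators by some non-zero $d\in D$ embeds $\phi(M)$ as a $C$-submodule of $C$, exhibiting $M$ as a full ideal. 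The main subtlety here is ruling out the wrong module structure on $M\tensor K$ in the split case, which is exactly where the traceability hypothesis is essential.

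For the third assertion, if $cI=c'I'$ for non-zero-divisors $c,c'\in C$, then multiplication by $c$ and $c'$ give $C$-module isomorphisms $I\isom cI=c'I'\isom I'$. Conversely, given a $C$-module isomorphism $\psi:I\isom I'$, passing to $K$ produces an automorphism $\psi\tensor K$ of the free rank-one $C\tensor_D K$-module $C\tensor_D K$, hence multiplication by some unit $u\in (C\tensor_D K)^\times$. Writing $u=c'/c$ with $c\in D\setminus 0$ and $c'\in C$, non-degeneracy of $C$ makes $C\tensor_D K$ semisimple, so a unit $c'$ there must be a non-zero-divisor in $C$; then $c\psi(i)=c'i$ for all $i\in I$ gives $cI'=c'I$.

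The main obstacle will be the second assertion: carefully identifying which $C\tensor_D K$-modules have the correct trace pairing, and then producing a clean embedding back into $C$ by denominator-clearing while making sure the chosen element is a non-zero-divisor in $C$ itself. Everything else is formal once the generic-fiber picture is established.
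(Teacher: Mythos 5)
Your proof is correct and follows essentially the same route as the paper: pass to the generic fiber, use traceability to identify $M\otimes_D K$ with the regular representation $C\otimes_D K$ (the paper does this by putting $\tau$ in rational normal form rather than by classifying modules over the \'etale algebra, but the content is identical), and then clear denominators to land inside $C$. One small remark: in the third assertion you invoke non-degeneracy to conclude that $c'$ is a non-zero-divisor, but this is superfluous --- a unit of $C\otimes_D K$ is automatically a non-zero-divisor on $C\subset C\otimes_D K$ --- and indeed the paper proves that part for arbitrary $C$ by choosing a nonzero $d\in D\cap I$ and checking $\phi(d)I=dJ$ directly.
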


\begin{proof}
Let $K$ be the fraction field of $D$.  We have that $C\tesnor_D K$ is a 2 dimensional $K$-algebra, generated
by $1$ and $\tau$.  If $M$ is a traceable $C$-module, then $M\tensor_D K$ is a 2 dimensional $K$ vector space and
a $C\tesnor_D K$-module.  We can put the action of $\tau$ on $M\tensor_D K$ into rational normal form.  An easy calculation
shows that if $C$ is non-degenerate then $\tau$ does not have any repeated eigenvalues, and so we can assume that
$\tau$ acts on $M\tensor_D K$ by 
$\left(
\begin{smallmatrix}
0 & r\\
1 &q 
\end{smallmatrix}\right)$
and $\tau^2=-s\tau-t$ with $q,r,s,t\in K$.
Since $M$ is traceable, we have $q=-s$ and since $\tau^2$ must
act on $M$ in the same way as $-s\tau-t$ we have $r=-t$.  
Thus, as
$(C\tesnor_D K)$-modules, $M\tensor_D K\isom C\tesnor_D K$.  Since $M\subset M\tesnor_D K$,
we have realized $M$ as a $C$-submodule of $C\tesnor_D K$.  Since $M$ is finitely generated as a $C$-module,
for some non-zero $d\in D$, we have $dM\subset C$.  This realizes $M$ as a full ideal of $C$.

Let $I$ be a full ideal of $C$.  By definition $I$ is a locally free rank 2 $D$-module.  
We have that $I\tensor_D K \subset C \tesnor_D K $, and since both are two dimensional $K$ vector spaces,
we must have equality.  So $I\tensor_D K$ and $C \tesnor_D K $ give the same trace map
from $C \tesnor_D K $ to $K$, which when restricted to $C$ gives the trace maps
that $I$ and $C$ give from $C$ to $D$.  Thus $I$ is traceable.  

Clearly two ideals in the same ideal class are isomorphic as modules.  Suppose we have a module isomorphism of two full ideals
$\phi : I \ra J$.  Since $I \tesnor_D K\isom C\tesnor_D K$, there is some nonzero element $d\in D\subset C$
such that $d\in I$.  We claim that as subsets of $C$, $\phi(d)I=dJ$.  Suppose
we have an element $\phi(d) x \in \phi(d)I$ with $x\in I$.  We have $\phi(d)x=\phi(dx)=d\phi(x)$ since
$\phi$ is a $C$-module homomorphism.  Thus, $\phi(d)x\in dJ$.  Similarly we see that $dJ \subset \phi(d)I$.
Thus $I$ and $J$ are in the same ideal class.  
\end{proof}

This allows us to deduce Corollary~\ref{C:ideals}, which presents the bijection of Gauss composition
in terms of ideals instead of traceable modules.  If we further require  our base $D$ to be a Dedekind domain and that $C$ be a domain,
then all non-zero ideals of $C$ are full.  When $D$ is a domain with fraction field $K$, it is easy to check
 that
 $C$ is a domain if and only if $f$ is irreducible over $K$.  Thus we deduce the following corollary.

\begin{corollary}\label{C:ideals2}
When $D$ is a Dedekind domain with fraction field $K$, there is a discriminant preserving bijection
$$
 \left\{\parbox{2.5 in}{isomorphism classes of non-degenerate linear binary quadratic 
forms/$D$ that are irreducible over $K$}\right\}\longleftrightarrow \left\{\parbox{2.5 in}{isomorphism classes of $(C,I)$, with $C$ a non-degenerate quadratic algebra$/D$ and a domain, and
$I$ a non-zero ideal class of $C$ }\right\}.$$
\end{corollary}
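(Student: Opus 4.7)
The plan is to deduce the corollary from Corollary~\ref{C:ideals}, which already supplies a discriminant-preserving bijection between non-degenerate linear binary quadratic forms over $D$ and pairs $(C,I)$ with $I$ a full ideal class. Only two auxiliary facts are needed:
\emph{(a)} under the bijection of Corollary~\ref{C:ideals}, the form $f$ is irreducible over $K$ if and only if the corresponding quadratic algebra $C$ is a domain; and
\emph{(b)} when $D$ is Dedekind and $C$ is a non-degenerate quadratic $D$-algebra that is a domain, the nonzero ideals of $C$ are exactly the full ideals.

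For \emph{(a)}, I would work locally where $V$ and $L$ are free and write $f = ax^2z + bxyz + cy^2z$. The Key Construction identifies $C_K := C\tensor_D K$ with $K[\tau]/(\tau^2 + b\tau + ac)$. Since $f$ is non-degenerate, $b^2 - 4ac \neq 0$, so $C_K$ is an \'etale $K$-algebra of degree $2$, hence either a field or isomorphic to $K \times K$; it is a field precisely when $b^2 - 4ac$ is a non-square in $K$. A short case check (including the $a = 0$ case, where $f = y(bx + cy)z$ is automatically reducible and $b^2 - 4ac = b^2$ is automatically a square) shows that $f$ is reducible over $K$ iff $b^2 - 4ac$ is a square in $K$. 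Since $D$ is a domain and $C$ is locally free as a $D$-module, the natural map $C \hookrightarrow C_K$ is injective, so $C$ is a domain iff $C_K$ is a domain iff $C_K$ is a field. The condition ``$C_K$ is a field'' is intrinsic, so this local check is enough.

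For \emph{(b)}, since $C$ is a finitely generated module over the Noetherian ring $D$, every ideal $I \subset C$ is finitely generated as a $D$-module. Because $C$ is a domain containing $D$, $I$ is torsion-free as a $D$-module; over the Dedekind domain $D$, finitely generated torsion-free modules are projective, hence locally free. To see the rank is $2$, tensor with $K$: if $I \neq 0$ then $I \tensor_D K$ is a nonzero ideal in the field $C_K$ and so equals $C_K$, which has $K$-dimension $2$. Thus nonzero ideals are full, and the converse is immediate from the definition of \emph{full}. Proposition~\ref{P:idmod} already shows that two full ideals of $C$ lie in the same ideal class iff they are isomorphic as $C$-modules, so the ``isomorphism classes of $(C,I)$'' on the two sides describe the same data. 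Combining \emph{(a)}, \emph{(b)}, and Corollary~\ref{C:ideals} yields the stated discriminant-preserving bijection. The only mildly delicate point is the case analysis in \emph{(a)} when $a$ vanishes locally, but the non-degeneracy of $f$ keeps each sub-case very short.
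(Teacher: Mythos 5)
Your proposal follows the same route as the paper: the paper's own justification of Corollary~\ref{C:ideals2} is exactly the two facts you isolate, namely that over a domain $D$ the algebra $C$ is a domain if and only if $f$ is irreducible over $K$, and that over a Dedekind domain all non-zero ideals of a quadratic domain $C$ are full; the paper asserts both with essentially no detail, so your write-up is a legitimate filling-in. Your argument for \emph{(b)} is correct and complete. The one flaw is in \emph{(a)}: the criterion ``$C_K$ is a field precisely when $b^2-4ac$ is a non-square in $K$'' and the companion claim ``$f$ is reducible over $K$ iff $b^2-4ac$ is a square in $K$'' both fail when $K$ has characteristic $2$ (e.g.\ $x^2+xy+y^2$ over $\F_2$ is irreducible while its discriminant $1$ is a square, and $K[\tau]/(\tau^2+b\tau+ac)$ being a field is then an Artin--Schreier condition on $ac/b^2$, not a square condition on $b^2$). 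Since a Dedekind domain may well have fraction field of characteristic $2$, this case is not vacuous. The fix is easy and makes the argument cleaner: bypass the discriminant entirely and observe that for $a\neq 0$ the substitution $t\mapsto t/a$ identifies the splitting behaviour of $ax^2+bxy+cy^2$ with that of $\tau^2+b\tau+ac$, while for $a=0$ both $f=y(bx+cy)z$ and $\tau(\tau+b)$ visibly split; hence $f$ is irreducible over $K$ iff $C_K$ is a field iff $C$ is a domain, in every characteristic. With that repair the deduction from Corollary~\ref{C:ideals} is complete and agrees with the paper.
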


\subsection{Theorem~\ref{T:gen} over \Z}
In Theorem~\ref{T:gen}, we remove the condition of non-degeneracy seen in the classical theorems in order to give the most complete theorem.  Over $\Z$, there is only one degenerate 
quadratic ring, but
over an arbitrary base, especially one with 0-divisors, much more can occur in the degenerate locus.  Moreover, one may
want to study quadratic rings and their ideal classes under base change (even for example from $\Z$ to $\Z/p\Z$), in which case non-degenerate objects may become degenerate.
Over $\Z$, however, when we include the degenerate case,
 we are already forced to include a module which is not realized as an ideal.  We give here 
Theorem~\ref{T:gen} and its
 specialization, Theorem~\ref{T:tbqf},
 which will be proven in the next section, when considered over $\Z$. 

 \begin{theorem}\label{T:Zdeg}
There are discriminant preserving bijections 
$$\bij{twisted \GZ equivalence classes of binary quadratic forms over \Z}{isomorphism classes of $(C,M)$, with $C$ a oriented quadratic ring/\Z, and $M$ a $C$-module that is a free rank 2 $\Z$-module such that
$C$ and $M$ give the same trace map $C \ra R$}$$
and
$$\bij{$\Gto$ equivalence classes of binary quadratic forms over \Z}{isomorphism classes of $(C,M)$, with $C$ a quadratic ring over $\Z$, and $M$ a $C$-module which is a free rank 2 $\Z$-module such that
$C$ and $M$ give the same trace map $C \ra R$}.$$
\end{theorem}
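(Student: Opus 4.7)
The plan is to deduce both bijections directly from Theorem~\ref{T:gen}, with discriminant preservation supplied by Theorem~\ref{T:disc}, specialized to $S=\Spec\Z$. Over $\Z$ every locally free module of finite rank is already free, so ``traceable $C$-module'' in Theorem~\ref{T:gen} agrees verbatim with the condition stated in Theorem~\ref{T:Zdeg}, and a linear binary quadratic form over $\Z$ is exactly a triple $(V,L,f)$ with $V$ free of rank $2$, $L$ free of rank $1$, and $f\in\Sym^2 V\tensor L$.

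For the second bijection I would simply recall the observation from the introduction that over $\Z$, isomorphism classes of linear binary quadratic forms are the same as $\Gto$-equivalence classes of binary quadratic forms: choosing $\Z$-bases of $V$ and $L$ rigidifies the data to a triple $(a,b,c)\in\Z^3$, and the residual equivalence permitted by changing those bases is precisely the $\Gto$-action. Thus Theorem~\ref{T:gen} over $\Z$ is exactly the bijection asserted, and Theorem~\ref{T:disc} gives discriminant preservation.

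For the first bijection I would use the orientation to rigidify one additional line. By Theorem~\ref{T:gen}, every linear binary quadratic form comes with a canonical isomorphism $C/\OS\isom\wedge^2 V^*\tensor L^*$; dually, choosing a generator of $C/\Z$ (an orientation of $C$) is the same as choosing a generator of $\wedge^2 V\tensor L$, which in turn is the same as an identification $L\isom\wedge^2 V^*$. A linear binary quadratic form together with such an identification is nothing other than a pair $(V,h)$ with $V$ free of rank $2$ and $h\in\Sym^2 V\tensor\wedge^2 V^*$, and an isomorphism of such data is just an isomorphism $V\ra V'$. After choosing a $\Z$-basis $x,y$ of $V$, an element $g=\left(\begin{smallmatrix}k&\ell\\m&n\end{smallmatrix}\right)\in\GZ=\GL(V)$ acts on the $\Sym^2 V$ factor by $f\mapsto f(kx+\ell y,\, mx+ny)$ and on the $\wedge^2 V^*$ factor by $(\det g)^{-1}$, which is precisely the twisted $\GZ$-action. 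The naturality of $C/\OS\isom\wedge^2 V^*\tensor L^*$ guarantees that isomorphisms $(V,L,f)\ra(V',L',f')$ compatible with the chosen identifications $L\isom\wedge^2 V^*$ and $L'\isom\wedge^2 V'^*$ correspond under the bijection of Theorem~\ref{T:gen} to isomorphisms $(C,M)\ra(C',M')$ compatible with the chosen orientations of $C$ and $C'$.

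The main step beyond invoking the earlier theorems is therefore purely bookkeeping: checking that the two rigidifications (orientation on the ring side, trivialization $L\isom\wedge^2 V^*$ on the form side) correspond under the canonical isomorphism $C/\OS\isom\wedge^2 V^*\tensor L^*$, and that the induced $\GL(V)$-action on $\Sym^2 V\tensor\wedge^2 V^*$ coincides with the twisted $\GZ$-action. Neither presents a genuine obstacle once Theorem~\ref{T:gen} is in hand, so no essential difficulty remains.
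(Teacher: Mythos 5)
Your proposal is correct and follows essentially the same route as the paper: the second bijection is Theorem~\ref{T:gen} specialized to $S=\Spec\Z$ (with the observation that isomorphism classes of linear binary quadratic forms over $\Z$ are $\Gto$-classes), and the first is the specialization to $L=\wedge^2 V^*$, i.e.\ Theorem~\ref{T:tbqf} over $\Z$, where the orientation of $C$ corresponds via $C/\OS\isom\wedge^2 V^*\tensor L^*$ to the trivialization of $L$ and the induced $\GL(V)$-action on $\Sym^2 V\tensor\wedge^2 V^*$ is the twisted $\GZ$-action. The paper packages this as the statement that Theorem~\ref{T:Zdeg} is Theorems~\ref{T:gen} and~\ref{T:tbqf} considered over $\Z$; your unwinding of the rigidification is exactly the content of Section~\ref{S:bqf}.
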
 

The $0$ form corresponds to the ring $\Z[\tau]/\tau^2$ and the module
$\Z x\oplus \Z y$ where $\tau$ annihilates $x$ and $y$.  This module 
cannot be realized as an ideal of $\Z[\tau]/\tau^2$, and so even when the base is $\Z$ we see that we
must consider modules and not just ideals to get the most complete theorem.

\section{Other kinds of binary quadratic forms}\label{S:bqf}

We can specialize Theorem~\ref{T:gen} by specifying the locally free rank 1 module $L$.  
The linear binary quadratic forms with a given $L$ correspond to $(C,M)$ as above with
$$
C/\OS \isom \wedge^2_\OS M^* \tensor L^*
$$
as $\OS$-modules.
This can be thought us as an $L$-type orientation of $(C,M)$.   
For example,
we can fix $L=\OS$ to get binary quadratic forms (as defined in Section~\ref{S:intro}), the analog of binary
quadratic forms over \Z up to non-twisted \GZ-equivalence.  

\begin{theorem}\label{T:bqf}
There is a discriminant preserving bijection
$$
 \left\{\parbox{2.5 in}{isomorphism classes of binary quadratic 
forms/$S$}\right\}\longleftrightarrow  \left\{\parbox{2.5 in}{isomorphism classes of $(C,M)$, with $C$ a quadratic algebra$/S$, 
$M$ a traceable $C$-module, and 
$C/\OS \isom \wedge^2_\OS M^* $}\right\}.$$
Isomorphisms of $(C,M)$ are required to commute with the isomorphism $C/\OS \isom \wedge^2_\OS M^*$. 
\end{theorem}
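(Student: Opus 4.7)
The plan is to deduce Theorem~\ref{T:bqf} as a straightforward specialization of Theorem~\ref{T:gen}, obtained by requiring $L = \OS$ and tracking what this fixed trivialization becomes on the module side.

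First I would set up the dictionary on the form side. A binary quadratic form $(V,f)$ with $f \in \Sym^2 V$ is the same data as a linear binary quadratic form $(V, L, f)$ equipped with a chosen trivialization $L \isom \OS$, using $\Sym^2 V \isom \Sym^2 V \tensor \OS$. An isomorphism of binary quadratic forms $\phi : V \to V'$ with $\Sym^2\phi(f) = f'$ corresponds exactly to a linear binary quadratic form isomorphism $(\phi, \mathrm{id}_{\OS})$; more generally, an isomorphism $(\phi, \psi)$ of linear forms descends to one of binary forms precisely when $\psi$ is the identity under the fixed trivializations of $L$ and $L'$. Thus the category of binary quadratic forms is the category of linear binary quadratic forms with $L$ trivialized, with morphisms preserving the trivialization.

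Next I would translate this extra datum through the bijection of Theorem~\ref{T:gen}. That theorem gives a canonical isomorphism $C/\OS \isom \wedge^2 V^* \tensor L^*$. Setting $V = M$, a trivialization $L \isom \OS$ is equivalent to $L^* \isom \OS$ and hence, via the canonical isomorphism, to an isomorphism $\alpha : C/\OS \isom \wedge^2 M^*$. Conversely, any such $\alpha$ together with the canonical isomorphism extracts a trivialization of $L$. So on the module side, triples $(C, M, \alpha)$ correspond exactly to linear binary quadratic forms with a trivialization of $L$. An isomorphism of pairs $(C, M)$ from Theorem~\ref{T:gen} induces compatible isomorphisms on $C/\OS$ and on $\wedge^2 M^*$; requiring that the iso on the linear-form side satisfy $\psi = \mathrm{id}_{\OS}$ translates precisely to requiring the induced iso on the module side to commute with $\alpha$, which is the isomorphism condition stated in Theorem~\ref{T:bqf}.

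Discriminant preservation is immediate from Theorem~\ref{T:disc}: once $L$ is trivialized the module $\wedge^2 V \tensor L$ is canonically identified with $\wedge^2 M$, and Theorem~\ref{T:disc} already matches discriminants compatibly with this identification. The main (mild) subtlety is bookkeeping — one must check that the trivialization of $L$ and the choice of $\alpha$ correspond under all isomorphisms in the correct functorial way — but this is a direct consequence of the canonicity of the isomorphism $C/\OS \isom \wedge^2 V^* \tensor L^*$ in Theorem~\ref{T:gen}, so no genuinely new argument is required.
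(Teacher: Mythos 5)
Your proposal is correct, and it follows the specialization route that the paper itself gestures at in the paragraph preceding the theorem: a binary quadratic form is a linear binary quadratic form together with a trivialization $L\isom\OS$, and pushing that trivialization through the canonical isomorphism $C/\OS\isom\wedge^2 V^*\tensor L^*$ of Theorem~\ref{T:gen} yields exactly the datum $C/\OS\isom\wedge^2_\OS M^*$. Where you differ from the paper's written proof is that the paper does not formally deduce Theorem~\ref{T:bqf} from the \emph{statement} of Theorem~\ref{T:gen}; it instead reruns the same argument, rigidifying both moduli problems to $\A^3$ and observing that the Key Construction is equivariant for $\GL_2$ (rather than $\GL_2\times\GL_1$), which gives an equivalence of quotient stacks (Theorem~\ref{T:stacksbqf}). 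The distinction matters at exactly the point you flag as the ``mild subtlety'': the bare bijection of isomorphism classes in Theorem~\ref{T:gen} would not suffice, because the equivalence relation on binary quadratic forms is strictly finer than that on linear binary quadratic forms with $L=\OS$ (an isomorphism of linear forms may act on $L$ by a unit other than $1$), so a priori the two quotients could differ even though the underlying objects match up. What your argument genuinely needs is the correspondence as an equivalence of groupoids --- the naturality of $C/\OS\isom\wedge^2 V^*\tensor L^*$ under isomorphisms on both sides --- and this is precisely the content of the moduli-stack formulation (Theorem~\ref{T:stacks}); your appeal to ``canonicity'' is therefore justified, but it should be pinned to that statement (or to the manifest equivariance of the Key Construction) rather than to the bijection of Theorem~\ref{T:gen} alone. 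Your treatment of discriminants via Theorem~\ref{T:disc} is fine. The trade-off is that the paper's version is self-contained and uniform across all choices of $L$, while yours makes transparent that no new computation is needed once the functoriality of the general construction is in hand.
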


  Another useful choice is
$L=\wedge^2 V^*$, which is not fixed but rather depends of the $V$ of the form. A \emph{twisted binary quadratic form} over $S$ is a locally free rank 2 $\OS$-module $V$
and a global section $f\in \Sym^2 V\tesnor \wedge^2 V^*$.  Isomorphisms of binary quadratic forms are
given by isomorphisms $V\ra V'$ that preserve the section.   This allows us to specialize to Theorem~\ref{T:classGL2} because
isomorphism classes of twisted binary quadratic forms over $\Z$ correspond exactly to the twisted \GZ equivalence classes
of binary quadratic forms of Theorem~\ref{T:classGL2}.
\begin{theorem}\label{T:tbqf}
There is a discriminant preserving bijection
$$
 \left\{\parbox{2.5 in}{isomorphism classes of twisted binary quadratic 
forms/$S$}\right\} \longleftrightarrow \left\{\parbox{2.5 in}{isomorphism classes of $(C,M)$, with $C$ a quadratic algebra$/S$, 
$M$ a traceable $C$-module, and 
$C/\OS \isom \OS $}\right\}.$$
Isomorphisms of $(C,M)$ are required to commute with the isomorphism $C/\OS \isom \OS$. 
\end{theorem}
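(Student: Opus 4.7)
The plan is to deduce Theorem~\ref{T:tbqf} from Theorem~\ref{T:gen} by specialization, in direct parallel with Theorem~\ref{T:bqf} (which is the case $L=\OS$). A twisted binary quadratic form $(V,f)$ with $f\in\Sym^2 V\tensor\wedge^2 V^*$ is exactly the data of a linear binary quadratic form $(V,L,f)$ together with the identification $L=\wedge^2 V^*$. Theorem~\ref{T:gen} applied to this linear binary quadratic form produces $(C,M)$ together with the $\OS$-module isomorphism
\[
C/\OS \isom \wedge^2 V^* \tensor L^* = \wedge^2 V^* \tensor (\wedge^2 V^*)^*.
\]
I then compose with the canonical evaluation isomorphism $\wedge^2 V^* \tensor (\wedge^2 V^*)^* \isom \OS$ (which exists because $\wedge^2 V^*$ is invertible) to obtain the required trivialization $C/\OS\isom\OS$. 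In the reverse direction, given $(C,M)$ equipped with an isomorphism $C/\OS\isom\OS$, I set $V=M$ and $L=\wedge^2 V^*$; the composite $\OS \isom C/\OS \isom \wedge^2 V^*\tensor L^*$ supplies exactly the rigidification needed by Theorem~\ref{T:gen} to produce an $f\in\Sym^2 V\tensor\wedge^2 V^*$.

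The one substantive point is matching the notions of isomorphism on the two sides. An isomorphism $\phi:V\to V'$ of twisted binary quadratic forms canonically induces an isomorphism of the two distinguished invertible modules $\wedge^2 V^*$ and $\wedge^2 V'^*$, and conversely an isomorphism of the underlying linear binary quadratic forms is an isomorphism of twisted forms precisely when its $L$-component is this canonically induced one. By naturality of the evaluation pairing, such isomorphisms correspond, under the bijection of Theorem~\ref{T:gen}, to exactly those isomorphisms $(C,M)\to(C',M')$ whose induced map $C/\OS\to C'/\OS$ commutes with the chosen trivializations to $\OS$ — i.e.\ to isomorphisms in the sense required by Theorem~\ref{T:tbqf}. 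Discriminant preservation is inherited directly from Theorem~\ref{T:disc}.

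The main potential obstacle is purely bookkeeping: tracking the canonical evaluation pairing and its interaction with $\phi$ carefully enough to confirm that, over $\Z$, this specialization really does recover the twisted $\GZ$-equivalence of Theorem~\ref{T:classGL2}. This reduces to a direct computation: taking $V=\Z x\oplus\Z y$ with basis vector $x^*\wedge y^*$ for $\wedge^2 V^*$, one verifies that the isomorphisms of twisted binary quadratic forms implement exactly the action $g\circ f=(\det g)^{-1}f(kx+\ell y,mx+ny)$ on $f=ax^2+bxy+cy^2$ described in the introduction.
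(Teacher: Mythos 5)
Your proposal is correct and is essentially the paper's own approach: the paper introduces Theorem~\ref{T:tbqf} precisely as the specialization of Theorem~\ref{T:gen} to $L=\wedge^2 V^*$, and its formal proof (Theorem~\ref{T:stackstbqf}) is the same argument phrased as a rigidification with structure group $\GL_2$ in place of $\GL_2\times\GL_1$, which is exactly your isomorphism-matching step in stack language. The one point to keep in mind is that this deduction needs the underlying equivalence of groupoids (Theorem~\ref{T:stacks}), not merely the bijection of isomorphism classes, in order to transport isomorphisms compatibly with the trivializations of $C/\OS$ --- which you do address via naturality of the evaluation pairing.
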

The isomorphism $C/\OS \isom \OS $ is an orientation of $C$, and over $\Z$ is exactly the orientation we defined above.
So when $S=\Spec \Z$ Theorem~\ref{T:tbqf} gives us the first bijection of Theorem~\ref{T:Zdeg}, and when we 
consider
only non-degenerate objects we recover the classical Theorem~\ref{T:classGL2}.
Of course, we could get similar theorems by choosing some other $L$, either fixed or as a function of $V$, such
as $(\wedge^2 V)^{\tensor k}$.  

The proofs of Theorems~\ref{T:bqf} and \ref{T:tbqf} are completely analogous to that of Theorem~\ref{T:gen}.  Moreover,
we have global descriptions of the bijections which can be read off from the Global Constructions
in Section~\ref{S:global}.
For completeness, we give the moduli stack version of the proofs of the above theorems.

\begin{theorem}\label{T:stacksbqf}
 There is an equivalence of moduli stacks between the moduli stack of binary quadratic forms on $S$
and the moduli stack of pairs $(C,M)$ where $C$ is a quadratic $\OS$-algebra, $M$ is a traceable $C$-module,
and $C/\OS\isom \wedge^2_{\OS} M^*$ is given.
\end{theorem}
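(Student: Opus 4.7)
My plan is to mimic the proof of Theorem~\ref{T:stacks} almost verbatim, carefully tracking what changes when we fix $L=\OS$ and add the orientation datum $C/\OS\isom \wedge^2_{\OS} M^*$. The key observation is that both the auxiliary line bundle $L$ and the comparison $C/\OS \isom \wedge^2 V^*\tesnor L^*$ get trivialized simultaneously, and the acting group collapses from $\GL_2\times\GL_1$ to $\GL_2$.

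First I would rigidify the moduli stack of binary quadratic forms. A binary quadratic form $(V,f)$ with a chosen isomorphism $V\isom\OS^2$ is just a triple $(a,b,c)\in\OS^3$ (the coefficients of $f=ax^2+bxy+cy^2$), so the rigidified moduli scheme $\mathcal{F}_B'$ is $\A^3$. On the other side, I would rigidify a triple $(C,M,C/\OS\isom\wedge^2_{\OS} M^*)$ by choosing $M\isom\OS^2$; this choice induces $\wedge^2_{\OS}M^*\isom \OS$, which composed with the given orientation isomorphism produces an isomorphism $C/\OS\isom\OS$. Thus the rigidified data is exactly a pair $(C,M)$ with $C/\OS\isom\OS$ and $M\isom\OS^2$ given—precisely the set-up of $\mathcal{M}_B$ from Theorem~\ref{T:stacks}, which we already know is $\A^3$. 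Call this scheme $\mathcal{M}_B'$.

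Next I would identify the group action. Because $L=\OS$ is fixed and the identification $C/\OS\isom\wedge^2_{\OS} M^*$ is part of the data (rather than a separate free choice), the $\GL_1$ factor from Theorem~\ref{T:stacks} disappears: on the forms side there is no line bundle to rescale, and on the modules side the induced orientation $C/\OS\isom\OS$ is forced by the choice of basis of $M$. What remains is the natural $\GL_2$-action: $g\in\GL_2$ acts on $\mathcal{F}_B'=\A^3$ by the usual substitution on $f\in\Sym^2 V$, and on $\mathcal{M}_B'=\A^3$ by acting on $M\isom\OS^2$ in such a way that the induced action on $\wedge^2_{\OS}M^*$ (namely by $\det g^{-1}$) is absorbed into the induced action on $C/\OS$. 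One then checks that the Key Construction, which underlies the proof of Theorem~\ref{T:gen}, gives an isomorphism $\mathcal{F}_B'\isom\mathcal{M}_B'$ of affine schemes that is equivariant for this $\GL_2$-action; this equivariance is a small bookkeeping verification already essentially carried out in the proof of Theorem~\ref{T:stacks}, restricted to the subgroup $\GL_2\times\{1\}\subset\GL_2\times\GL_1$.

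Finally, I would pass to quotient stacks: the moduli stack of binary quadratic forms is $[\mathcal{F}_B'/\GL_2]$ and the moduli stack of triples $(C,M,C/\OS\isom\wedge^2_{\OS}M^*)$ is $[\mathcal{M}_B'/\GL_2]$, so the equivariant isomorphism descends to an equivalence of stacks. The main technical point to verify is that the orientation isomorphism is truly tracked correctly under $\GL_2$ on both sides—concretely, that a change of basis of $M$ by $g$ twists the rigidified orientation $C/\OS\isom\OS$ by $\det g^{-1}$, and that this exactly matches the twist picked up by $f\in\Sym^2 V$ on the forms side when one transports along the Key Construction; this is the only place where a wrong convention would produce a spurious $\GL_1$ or the wrong character, so it deserves the most care.
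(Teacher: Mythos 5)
Your proposal is correct and follows essentially the same route as the paper: rigidify both sides to $\A^3$ (using the chosen basis of $M\isom\OS^2$ together with the given orientation to induce $C/\OS\isom\OS$), act by $\GL_2$ with the $\det g^{-1}$ twist on $C/\OS$, and pass to quotient stacks via the equivariant Key Construction. The point you flag as deserving care---that the change of basis of $M$ twists the rigidified orientation by $\det g^{-1}$ in a way matching the forms side---is exactly the convention the paper adopts in its proof.
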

\begin{proof}
We can rigidify the two moduli problems of the theorem. 
 We have that $\A^3$ is the moduli scheme $\mathcal{F'}_B$ of binary quadratic forms $(V,f)$ on $S$ with 
given $V\isom\OS^2$.  Also, from Section~\ref{S:proof} we know that
$\A^3$ is the moduli scheme $\mathcal{M'}_B$ of
pairs $(C,M)$ where $C$ is a quadratic $\OS$-algebra, $M$ is a traceable $C$-module, and we have given isomorphisms
$C/\OS \isom \wedge^2_{\OS} M^*$ and $M \isom \OS^2$.    
An element of $g\in \GL_2$ acts on 
the isomorphisms of $M$ and $V$ with $\OS^2$  by composing with $g$
and acts on the isomorphism $C/\OS \isom \OS$ by composing with $\det g^{-1}$.
This gives an action
of the group scheme $\GL_2$ on both of these rigidifies moduli spaces.
  The Key Construction of $(C,M)$ from the universal form gives us
$\mathcal{F'}_B\isom\mathcal{M'}_B$ which is equivariant for the $\GL_2$ actions.
Thus we have an equivalence of the quotient stacks by $\GL_2$, which are the moduli stacks in this theorem.
\end{proof}

\begin{theorem}\label{T:stackstbqf}
 There is an equivalence of moduli stacks between the moduli stack of twisted binary quadratic forms on $S$
and the moduli stack of pairs $(C,M)$ where $C$ is a quadratic $\OS$-algebra, $M$ is a traceable $C$-module,
and $C/\OS\isom \OS$ is given.
\end{theorem}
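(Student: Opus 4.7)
The plan is to follow the template of Theorem~\ref{T:stacksbqf}: rigidify both moduli problems to the same affine scheme $\A^3$, construct a $\GL_2$-equivariant isomorphism between them via the Key Construction of Section~\ref{S:proof}, and pass to quotient stacks to obtain the desired equivalence.

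I would rigidify a twisted binary quadratic form $(V,f)$ by choosing $V\isom\OS^2$. This trivializes $\wedge^2 V^*$ canonically, so $\Sym^2 V\tensor\wedge^2 V^*$ becomes canonically free of rank $3$, and the rigidified moduli scheme of twisted binary quadratic forms is $\A^3$, parametrizing the coefficients $(a,b,c)$ of $f=ax^2z+bxyz+cy^2z$. On the other side I would rigidify $(C,M,\phi\colon C/\OS\isom\OS)$ by choosing $M\isom\OS^2$; as in the local description in Section~\ref{S:proof}, the orientation $\phi$ pins down a normalized basis $1,\tau$ of $C$, and the algebra and module structures are then encoded by three parameters $(a,b,c)\in\OS^3$, again giving $\A^3$. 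The group $\GL_2$ acts on each rigidified moduli scheme by post-composing the chosen identification of $V$, respectively $M$, with $\OS^2$ by $g$, while leaving $\phi$ untouched (as forced by the requirement in Theorem~\ref{T:tbqf} that isomorphisms of $(C,M)$ commute with $\phi$).

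The one step that requires care is compatibility of these two $\GL_2$-actions under the Key Construction. Specialized to $L=\wedge^2 V^*$, the Key Construction produces $C/\OS\isom\wedge^2 V^*\tensor L^*=\wedge^2 V^*\tensor\wedge^2 V$, which is canonically $\OS$; under the induced action the two tensor factors transform by $\det g^{-1}$ and $\det g$ respectively, so the product is the identity and the canonical orientation is preserved, as needed. Granted this, the Key Construction gives an equivariant morphism between the two rigidified moduli schemes which by inspection of the local formulas is the identity on the $\A^3$ of coefficients $(a,b,c)$, hence an isomorphism. Taking quotient stacks by $\GL_2$ then yields the equivalence of moduli stacks asserted in the theorem.
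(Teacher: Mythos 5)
Your proposal is correct and follows essentially the same route as the paper's own proof: rigidify both moduli problems to $\A^3$, act by $\GL_2$ on the trivializations of $V$ and $M$, and descend the Key Construction to the quotient stacks. Your explicit check that the canonical trivialization of $\wedge^2 V^*\tensor\wedge^2 V$ is $\GL_2$-invariant (the $\det g^{-1}$ and $\det g$ factors cancelling) is a detail the paper leaves implicit, but it is the same argument.
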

\begin{proof}
We can rigidify the two moduli problems of the theorem. 
 We have that $\A^3$ is the moduli scheme $\mathcal{F''}_B$ of twisted binary quadratic forms $(V,f)$ of $S$ with 
given $V\isom\OS^2$.  Also, from Section~\ref{S:proof} we know that
$\A^3$ is the moduli scheme $\mathcal{M''}_B$ of
pairs $(C,M)$ where $C$ is a quadratic $\OS$-algebra, $M$ is a traceable $C$-module, and we have given isomorphisms
$C/\OS \isom \OS$ and $M \isom \OS^2$.  An element of $g\in \GL_2$ acts on 
the isomorphisms of $M$ and $V$ with $\OS^2$  by composing with $g$,
which gives an action of the group scheme $\GL_2$  on both of these rigidified moduli spaces.
  The construction of $(C,M)$ from the universal form gives us
$\mathcal{F''}_B\isom\mathcal{M''}_B$ which is equivariant for the $\GL_2$ actions.
Thus we have an equivalence of the quotient stacks by $\GL_2$, which are the moduli stacks in this theorem.
\end{proof}

\section{Relationship to work of Kneser}\label{S:Kneser}

In this section, we relate the work of this paper to the work of Kneser \cite{Kneser} on Gauss composition
over an arbitrary base,  First we reconcile our terminology with his.  Kneser works over an arbitrary ring $R$,
so throughout this section our base will always be a ring $R$.

Kneser works with \emph{quadratic maps} $q: M \ra N$, i.e set maps from $M$, a locally free rank 2 $R$-module, to
$N$, a locally free rank 1 $R$-module, such that for all $r\in R$ and $m\in M$, we have
$q(rm)=r^2q(m)$ and $q(x+y)-q(x)-q(y)$ is a bilinear form on $M\times M$.  

\begin{proposition}\label{P:quad}
Quadratic maps $q : M \ra N$ in the sense of Kneser described above are in bijection with linear binary quadratic forms
$f \in \Sym^2 M^* \tesnor N$, where $M$ and $N$ are $R$-modules locally free of ranks 2 and 1 respectively.  
\end{proposition}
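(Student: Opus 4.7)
I would produce a natural evaluation map
\[
\Psi: \Sym^2 M^* \tensor_R N \ra \mathrm{Quad}(M,N)
\]
from linear binary quadratic forms to Kneser's quadratic maps and show it is bijective by localizing. On a decomposable tensor, set $\Psi(\phi\cdot\psi\tensor n)(m)=\phi(m)\psi(m)n$ and extend $R$-linearly. This is well-defined on the symmetric quotient because $\phi(m)\psi(m)$ is symmetric in $\phi,\psi$. Homogeneity $\Psi(f)(rm)=r^2\Psi(f)(m)$ is immediate, and for any representative $f=\sum_i\phi_i\psi_i\tensor n_i$ the difference $\Psi(f)(m+m')-\Psi(f)(m)-\Psi(f)(m')=\sum_i\bigl(\phi_i(m)\psi_i(m')+\phi_i(m')\psi_i(m)\bigr)n_i$ is visibly bilinear, so $\Psi(f)$ is a Kneser quadratic map.

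To show $\Psi$ is a bijection, I reduce to the free case: both sides are compatible with localization, so we may pick a Zariski cover of $\Spec R$ on which $M\isom R^2$ and $N\isom R$. Picking a basis $e_1,e_2$ of $M$ with dual basis $x,y$, the module $\Sym^2 M^*\tensor N$ is free on $x^2,xy,y^2$, and $\Psi(ax^2+bxy+cy^2)(\alpha e_1+\beta e_2)=a\alpha^2+b\alpha\beta+c\beta^2$. Conversely, every $q\in\mathrm{Quad}(R^2,R)$ determines a unique triple $a:=q(e_1)$, $c:=q(e_2)$, $b:=q(e_1+e_2)-q(e_1)-q(e_2)$, and Kneser's axioms force $q(\alpha e_1+\beta e_2)=a\alpha^2+b\alpha\beta+c\beta^2$ via homogeneity for the pure squares and bilinearity of the polarization $b_q(m,m'):=q(m+m')-q(m)-q(m')$ for the cross term. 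This gives an explicit inverse to $\Psi$ in the free case.

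The main obstacle is promoting the local bijection to a global one, which requires $\mathrm{Quad}(-,N)$ to be a Zariski sheaf on $\Spec R$. The cleanest route is via the second divided power: quadratic maps $M\ra N$ are canonically in bijection with $\Hom_R(\Gamma^2 M,N)$, and for $M$ locally free of finite rank $\Gamma^2 M\isom\Sym_2 M$, so $\mathrm{Quad}(M,N)\isom\Hom_R(\Sym_2 M,N)$, which is manifestly a sheaf. The natural pairing $\Sym^2 M^*\times\Sym_2 M\ra R$ is a perfect pairing (it is the identity matrix on any free trivialization), hence identifies $\Hom_R(\Sym_2 M,N)\isom(\Sym^2 M^*)\tensor N$, and unwinding the definitions shows this identification is precisely $\Psi$. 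Combined with local bijectivity on any trivializing cover, this completes the proof.
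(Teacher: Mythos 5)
Your proposal is correct and follows essentially the same route as the paper: define the natural evaluation map from $\Sym^2 M^*\tensor N$ to quadratic maps globally, then verify bijectivity on a cover trivializing $M$ and $N$ via the explicit formulas $a=q(e_1)$, $c=q(e_2)$, $b=q(e_1+e_2)-q(e_1)-q(e_2)$. The only difference is how the local inverse is globalized: the paper checks that the locally defined map $\Sym_2 M\ra N$ is independent of the choice of basis and hence glues, whereas you invoke the sheaf property of $\mathrm{Quad}(-,N)$ via the identification with $\Hom_R(\Gamma^2 M,N)$ --- both are valid.
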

\begin{proof}
Given an  $f \in \Sym^2 M \tesnor N$, we naturally get a homomorphism from
$(\Sym^2 M^*)^*\isom \Sym_2 M$ to $N$ which we call $Q$.  Given $m\in M$ we
can define $q(m)=Q(m\tesnor m)$.  We see that for $r\in R$,
we have $q(rm)=Q(rm\tesnor rm)=r^2 q(m)$.  Also, $q(x+y)-q(x)-q(y)=Q(xy+yx)$ which is bilinear in $x$ and $y$.
Thus $q$ is a quadratic map in Kneser's sense.

Now suppose we have a quadratic map $q : M \ra N$ in the sense of Kneser.  First assume that $M$ is
a free $R$-module generated by $m_1$ and $m_2$.  Then we know that
$q(r_1m_1 +r_2m_2)=r_1^2 q(m_1)=r_2^2q(m_2)+r_1r_2B(m_1,m_2)$, where
$B(x,y)=q(x+y) -q(x)-q(y)$ and $r_i\in R$.  We can give a map 
$\Sym_2 M \ra N$ by sending 
$m_i \tesnor m_i$ to $r_i^2(q(m_i)$
and $m_1\tesnor m_2 +m_2\tensor m_1$ to  $B(m_1,m_2)$.  It is easily checked
that
 the map $\Sym_2 M \ra N$ does not depend on the choice of basis of $M$.
Now if $M$ is a locally free $R$-module, this defines a map $\Sym_2 M \ra N$
by defining it on local patches where $M$ is free.  

An easy computation for free $M$ shows that these two constructions are inverses locally on $R$
and thus inverses.  
\end{proof}

One advantage of the $\Sym^2 M \tesnor N$ point of view that we take in this paper is that it makes it clearer
how to base change a form.  

Kneser says that a quadratic map is \emph{primitive} if $q(M)$ generates $N$ as an $R$-module.  (Kneser actually only
gives this definition for $N=R$.)  We can see that primitivity of a quadratic map is a local condition on $R$.
When $M$ is free with basis $m_1,m_2$, then $q(c_1m_1+c_2m_2)=q(m_1)c_1^2+(q(m_1+m_2)-q(m_1)-q(m_2))c_1c_2+q(m_2)c_2^2$.
If $q(M)$ generates $N$ then $q(m_1),q(m_1+m_2)-q(m_1)-q(m_2),q(m_2)$ must generate $N$ as an $R$-module.
  Conversely, if $q(m_1),q(m_1+m_2)-q(m_1)-q(m_2),q(m_2)$ generate $N$, then $q(m_1),q(m_1+m_2),q(m_2)$ and thus
$q(M)$ generate $N$ as an $R$-module.  
The corresponding
linear binary quadratic form (of Proposition~\ref{P:quad}) is primitive
if and only if $q(m_1),q(m_1+m_2)-q(m_1)-q(m_2),q(m_2)$ generate $N$, which is also a local condition on $R$.
Thus we conclude the following.

\begin{proposition}
 A quadratic map in the sense of Kneser is primitive if and only if the corresponding linear binary quadratic form is primitive.
\end{proposition}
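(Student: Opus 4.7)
The plan is to reduce to the local situation where $M$ and $N$ are free, and then read off the equivalence directly from the formulas already assembled in the paragraph preceding the statement. Both notions of primitivity are local on $\Spec R$: Kneser's, because whether an $R$-submodule generates $N$ can be tested on any cover (as the paper notes immediately before the proposition), and the paper's, because primitivity of a linear binary quadratic form is defined locally by construction. I would therefore work on an affine open over which $M = R m_1 \oplus R m_2$ and $N$ is free of rank one.

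On such a patch, Proposition~\ref{P:quad} identifies $q$ with the form $f = a x^2 z + b xy z + c y^2 z$, where $a = q(m_1)$, $c = q(m_2)$, and $b = q(m_1 + m_2) - q(m_1) - q(m_2)$. By definition, $f$ is primitive if and only if $a, b, c$ generate $R$, equivalently if and only if they generate $N$ as an $R$-module. The identity $q(r_1 m_1 + r_2 m_2) = r_1^2 a + r_1 r_2 b + r_2^2 c$ shows that $q(M) \subseteq R a + R b + R c$, and the containments $a = q(m_1),\ c = q(m_2),\ a + b + c = q(m_1 + m_2) \in q(M)$ give the reverse inclusion of submodules. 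Hence $q(M)$ and $\{a, b, c\}$ span the same $R$-submodule of $N$, so one generates $N$ if and only if the other does, which is exactly the claimed equivalence.

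There is no substantive obstacle here: the entire computation is in effect carried out in the discussion immediately preceding the statement, and the proposal amounts to pairing it with the local-to-global reduction. The only thing worth remarking on is that the basis-dependent expression $a x^2 z + b xy z + c y^2 z$ encodes two properties — the submodule $R a + R b + R c$ of $N$ and the primitivity of $f$ — that are themselves basis-independent, so the equivalence patches correctly across the cover back to the global statement.
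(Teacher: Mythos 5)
Your argument is correct and is essentially identical to the paper's: both reduce to a patch where $M$ and $N$ are free and observe that the formula $q(r_1m_1+r_2m_2)=r_1^2q(m_1)+r_1r_2\bigl(q(m_1+m_2)-q(m_1)-q(m_2)\bigr)+r_2^2q(m_2)$ forces $q(M)$ and the three coefficients to generate the same submodule of $N$, with primitivity of both notions being local on $\Spec R$. The only cosmetic point is that you identify the coefficients $a,b,c\in R$ with the elements $q(m_1),q(m_2),\dots$ of $N$ via the chosen basis of $N$, which is harmless.
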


% Kneser assumes implicitly that invertible $C$-modules are \emph{locally on $R$} free $C$-modules of rank 1.

Kneser works for most of his paper with quadratic maps $q: M \ra R$, which correspond to our binary quadratic forms $f \in \Sym^2 V$.
Kneser gives a global algebraic construction, using Clifford algebras, of a quadratic $R$-algebra $C$ and a $C$-module $M$
from a quadratic map  $q: M \ra R$.    He sees that primitive maps give invertible $C$-modules.  He 
shows
this function from quadratic maps to pairs $(C,M)$ is neither injective nor surjective and he finds the structure
of the kernel and image.  In a talk \cite{Lenstra}, Lenstra suggested that Kneser's construction could be used
to given a theorem about quadratic maps along the lines of Theorem~\ref{T:bqf} restricted to primitive non-degenerate forms, where the traceable
condition does not apepar.

Kneser further gives a global algebraic construction 
of quadratic maps (corresponding to our linear binary quadratic forms)
from $(C,M)$ with $M$ an invertible $C$-module.   Lenstra \cite{Lenstra}
suggested an algebraic Clifford algebra construction of $(C,M)$ from a quadratic map which should provide an inverse construction
and thus suggested a theorem along the lines of Theorem~\ref{T:gen} restricted to primitive forms.  Lenstra also gave a construction
of a quadratic map from $(C,M)$ similar to ours in Section~\ref{S:global}.

In this paper we remove the conditions
of primitivity and non-degeneracy to give a bijection for all linear binary quadratic 
forms which requires us to consider non-invertible ideals and introduce the idea of traceable modules.  We give a geometric description
of the construction of $(C,M)$.  We provide a new framework
of linear binary quadratic forms as elements of $\Sym^2 V \tesnor L$ which later \cite{binarynic} allows us
to study
binary forms of degree $n$.
Finally, we give proofs of theorems of Gauss composition over an arbitrary base.

\section{Acknowledgements}
The author would like to thank Manjul Bhargava for asking the questions that inspired this research, guidance along the way, and helpful feedback both on the ideas
and the exposition in this paper.  She would also like to thank Hendrik Lenstra for suggesting that there could be such general statements of Gauss composition, and to thank Chris Skinner and Lenny Taelman for suggestions for improvements to the paper.  This work was done as part of the author's Ph.D. thesis at Princeton University, and during the work she was supported by an NSF Graduate Fellowship, an NDSEG Fellowship, an AAUW Dissertation Fellowship, and a Josephine De K\'{a}rm\'{a}n Fellowship.  This paper was prepared for submission while the author was supported by an American Institute of Mathematics Five-Year Fellowship.

\end{document}